\newcommand{\R}{\mathbb{R}}
\newcommand{\Q}{\mathbb{Q}}
\def\P{\mathbb{P}}
\newcommand{\<}{\leq}
\def\>{\geq}
\def\e{\epsilon}
\def\subset{\subseteq}
\newcommand{\N}{\mathbb{N}}
\newcommand{\tbf}{\textbf}
\newcommand{\lrd}{\lfloor}
\newcommand{\rrd}{\rfloor}
\def\OO{\mathcal{O}}
\def\II{\mathscr{I}}
\def\mcB{\mathcal{B}}
\def\surjective{\twoheadrightarrow}
\newcommand{\blank}{\underline{\hskip 6pt}}
\newtheorem{theorem}{Theorem}[section]
\newtheorem{lemma}[theorem]{Lemma}
\newtheorem{proposition}[theorem]{Proposition}
\newtheorem*{maintheorem}{Theorem}
\theoremstyle{remark}
\newtheorem{remark}[theorem]{Remark}
\theoremstyle{definition}
\newtheorem{definition}[theorem]{Definition}
\theoremstyle{definition}
\numberwithin{equation}{section}
\def\div{\operatorname{div}}
\def\mod{\operatorname{mod}}
\def\deg{\operatorname{deg}}
\def\im{\operatorname{Im}}
\def\supp{\operatorname{Supp}}
\def\codim{\operatorname{codim}}
\def\chr{\operatorname{char}}
\def\ex{\operatorname{Ex}}
\def\lct{\operatorname{lct}}
\author{Omprokash Das}
\address{School of Mathematics\\
Tata Institute of Fundamental Research\\
1 Homi Bhabha Road,
Colaba, Mumbai 400005}
\email{omdas@math.tifr.res.in}
\address{Department of Mathematics\\
University of Utah\\
155 S 1400 E\\
Salt Lake City, Utah 84112}
\email{das@math.utah.edu}
\author{Christopher D. Hacon}
\address{Department of Mathematics\\
University of Utah\\
155 S 1400 E\\
Salt Lake City, Utah 84112}
\email{hacon@math.utah.edu}
\date{}
\begin{document}
\title{On the Adjunction Formula for $3$-folds in characteristic $p>5$}
\maketitle

\begin{abstract}
In this article we prove a relative Kawamata-Viehweg vanishing-type theorem for PLT $3$-folds in characteristic $p>5$. We use this to prove the normality of minimal log canonical centers and the adjunction formula for codimension $2$ subvarieties on $\Q$-factorial $3$-folds in characteristic $p>5$.
\end{abstract}

\tableofcontents


\section{Introduction} Let $(X,\Delta )$ be a log canonical pair and $W$ a minimal log canonical center, then (under mild technical assumptions) by Kawamata's celebrated subadjunction theorem, it is known that $W$ is normal and we can write $(K_X+\Delta )|_W =K_W+\Delta _W$ where $(W,\Delta _W)$ is Kawamata log terminal \cite{Kaw98} (see also \cite{Kaw97} and the references therein). The proof of this result is based on the Kawamata-Viehweg vanishing theorem and Hodge theory. 
These results are known to fail in characteristic $p>0$ and therefore one may expect that Kawamata's subadjunction also fails in this context. It should however be noted that related results have been obtained in the closely related context of $F$-singularities (see for example \cite{Sch09},  \cite{HX15} and \cite{Das15}) and that the minimal model program has been established for 3-folds in characteristic $p>5$  (see \cite{HX15} and \cite{Bir13}). In particular \cite{HX15} exploits the fact that PLT singularities in dimension 3 and characteristic $p>5$ are closely related to the analogous notion of purely $F$-regular singularities. In this paper, using the results from   \cite{HX15} and \cite{Bir13}, we show that in dimension $3$ and characteristic $p>5$ a relative version of the Kawamata-Viehweg vanishing theorem holds and we use this to establish that (under some mild technical conditions) the analog of 
Kawamata's subadjunction result holds. 

\begin{maintheorem}[Theorem \ref{thm:pl-vanishing}]
Let $f:(X, S+B\>0)\to Z$ be either a pl-divisorial contraction or a pl-flipping contraction (cf. Definition \ref{d-pl}) such that $\dim X =3$, $\chr p>5$ and $S=\lfloor S+B\rfloor$ is reduced and irreducible. If the maximum dimension of the fibers of $f$ is $1$, then $R^if_*\OO_X(-S)=0$ for all $i>0$.	
\end{maintheorem}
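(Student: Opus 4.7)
Since every fibre of $f$ has dimension at most $1$, Grothendieck vanishing gives $R^if_*\mathcal{F}=0$ for $i\geq 2$ and every coherent sheaf $\mathcal{F}$; hence it suffices to prove $R^1f_*\OO_X(-S)=0$. I first recast the line bundle as an adjoint bundle: put $A:=-(K_X+S+B)$, which is $f$-ample by hypothesis. Then $-S=K_X+(A+B)$, so setting $L:=A+B$ one has $\OO_X(-S)\cong\omega_X\otimes\OO_X(L)$ with $L-B=A$ being $f$-ample. Because $(X,S+B)$ is PLT with $\lfloor S+B\rfloor=S$, the pair $(X,B)$ is KLT. Thus the assertion is precisely the relative Kawamata--Viehweg vanishing $R^1f_*\OO_X(K_X+L)=0$ for the KLT pair $(X,B)$ with $f$-ample perturbation $L-B=A$; in characteristic zero it would be immediate from the standard vanishing theorem.

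In dimension three and characteristic $p>5$ the plan is to prove this by Frobenius-splitting methods, exploiting the HX15 result that a $3$-dimensional PLT pair $(X,S+B)$ in this range is purely $F$-regular. This produces, for some $e\gg 0$, an $\OO_X$-linear splitting
\[
\phi\colon F^e_*\OO_X\bigl(\lceil (p^e-1)(S+B)\rceil\bigr)\twoheadrightarrow\OO_X
\]
of the natural Frobenius map $\OO_X\to F^e_*\OO_X\bigl(\lceil (p^e-1)(S+B)\rceil\bigr)$. Tensoring with $\OO_X(-S)$ and using the projection formula exhibits $\OO_X(-S)$ as a direct summand of $F^e_*\OO_X(M_e)$, where $M_e:=-S+\lceil(p^e-1)B\rceil$; consequently $R^1f_*\OO_X(-S)$ is a direct summand of $F^e_*R^1f_*\OO_X(M_e)$. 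Since $M_e$ is the sum of the $f$-ample divisor $-S$ and an effective divisor, it is itself $f$-ample, so for sufficiently large $e$ a relative Serre-- or Fujita-type vanishing should give $R^1f_*\OO_X(M_e)=0$, closing the argument.

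The main obstacle is precisely this last relative vanishing for $\OO_X(M_e)$: even though $M_e$ is $f$-ample in a $\mathbb{Q}$-sense, enough integral positivity is needed for Serre vanishing to apply over the possibly singular base $Z$, and one must ensure that the pure $F$-regularity supplied by HX15 is genuinely compatible with the pl-contraction $f$ (not merely a local analytic splitting on $X$). The hypotheses of dimension three, $p>5$, irreducibility of $\lfloor S+B\rfloor$, and one-dimensional fibres are all essential: they are exactly what the HX15 machinery requires, and the last of them reduces the relative geometry to curves, where $f$-ampleness is detected numerically by degree and the cohomological analysis simplifies considerably.
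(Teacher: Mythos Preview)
Your overall strategy---exhibit $\OO_X(-S)$ as a summand (or quotient) of a Frobenius pushforward whose higher direct images vanish by Serre---is the same as the paper's, but two steps break down as written.

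\emph{The divisor $M_e$ need not be $f$-ample.} You assert that $M_e=-S+\lceil(p^e-1)B\rceil$ is $f$-ample because it is ``$f$-ample plus effective''; this implication is false in general, and even with $\rho(X/Z)=1$ nothing in the hypotheses forces $B$ to be $f$-nef (a component of $B$ may contain a contracted curve and meet it negatively). Since the $(p^e-1)B$ term dominates as $e\to\infty$, $M_e$ can become $f$-anti-ample, so no Serre-type vanishing is available---the obstacle is not merely one of ``integral positivity'' as you suggest. The paper sidesteps this by using the \emph{trace} map rather than the $F$-splitting map: twisting
\[
F^e_*\OO_X\bigl((1-p^e)(K_X+S+B)\bigr)\longrightarrow\OO_X
\]
by $\OO_X(-S)$ yields a map out of $F^e_*\OO_X\bigl((1-p^e)(K_X+B)-p^eS\bigr)$, and this divisor equals $(p^e-1)\bigl(-(K_X+S+B)\bigr)-S$, which is visibly $f$-ample for $e\gg 0$.

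\emph{Global pure $F$-regularity is not available.} Citing \cite{HX15} for the statement that a $3$-fold PLT pair $(X,S+B)$ in characteristic $p>5$ is purely $F$-regular overreaches: that result requires standard coefficients, whereas here $B$ has arbitrary rational coefficients (indeed the paper perturbs them freely). Consequently you cannot produce a global splitting, and the direct-summand argument collapses. The paper's fix (Proposition~\ref{prop:surjective}) is to prove surjectivity of the twisted trace map only at codimension-$2$ points of $X$ lying on $S$; after localization this becomes a statement about strong $F$-regularity of the KLT \emph{curve} pair $(S,B_S)$, which is automatic. The cokernel $\mathcal Q_e$ is then supported on finitely many closed points, so $R^if_*\mathcal Q_e=0$ for $i>0$; together with the fibre-dimension bound killing $R^{i+1}f_*$ of the kernel, this gives $R^if_*\OO_X(-S)=0$. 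Both refinements---the correct choice of Frobenius divisor and the codimension-$2$ surjectivity trick---are essential, and your proposal is missing each of them.
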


This result allows us to prove the normality of the minimal LC centers for $3$-folds.
\begin{maintheorem}[Theorem \ref{normality}, \ref{adjunction}]
Let $(X, \Delta)$ be a $\Q$-factorial $3$-fold log canonical pair such that $X$ has Kawamata Log Terminal singularities. If $W$ is a minimal log canonical center of $(X, \Delta)$, then $W$ is normal. If moreover the coefficients of $\Delta$ belong to a DCC set $I\subset [0,1]$ and char $k>{\rm max}\{5, \frac{2}{\delta}\}$, where $\delta>0$ is the minimum of the set $D(I)\cap (0,1]$ (where $D(I)$ is defined in \ref{dcc-def}), then the following hold:
\begin{enumerate}
\item There exists effective $\Q$-divisors $\Delta _W$ and $M_W$ on $W$ such that $(K_X+\Delta )|_W\sim_{\Q} K_W+\Delta _W+M_W$. Moreover, if $\Delta=\Delta '+\Delta ''$ with $\Delta '$ (resp. $\Delta ''$) the sum of all irreducible components which contain (resp. do not contain) $W$, then $M_W$ is determined only by the pair $(X, \Delta ')$.
\item There exists an effective $\Q$-divisor $M'_W$ such that $M'_W\sim_{\Q} M_W$ and the pair $(W, \Delta _W+M'_W)$ is KLT.
\end{enumerate} 
\end{maintheorem}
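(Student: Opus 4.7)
The plan is to reduce both statements to a single $\Q$-factorial dlt modification of $(X,\Delta)$ and to use Theorem A to control the relevant direct images. For normality, the case $\codim_X W = 1$ reduces to PLT adjunction for $3$-folds in char $p>5$ \cite{HX15} since minimality of $W$ forces $(X,\Delta)$ to be PLT along $W$, and the case $\codim_X W = 3$ is trivial, so I focus on the case where $W$ is a curve. By \cite{HX15, Bir13} one produces a $\Q$-factorial dlt modification $\pi\colon (Y,\Delta_Y)\to (X,\Delta)$; letting $T$ be the sum of the components of $\lfloor\Delta_Y\rfloor$ that dominate $W$, I would run a $(K_Y+\Delta_Y-\varepsilon T)$-MMP over $X$ to reach a model on which (i) a single prime divisor $S$ (an LC place over $W$) survives, (ii) $\pi|_S$ has connected fibres of dimension $\leq 1$, and (iii) $\pi$ factors as a sequence of pl-divisorial and pl-flipping contractions of the type governed by Theorem A. Applying Theorem A inductively along this factorization and combining via Leray gives $R^i\pi_*\OO_Y(-S)=0$ for all $i>0$. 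Since $X$ is normal, $\pi_*\OO_Y=\OO_X$, and pushing forward $0\to\OO_Y(-S)\to\OO_Y\to\OO_S\to 0$ yields a surjection $\OO_W\twoheadrightarrow (\pi|_S)_*\OO_S$ which is also injective by dominance of $\pi|_S$ and reducedness of $W$. Since $S$ is a normal surface (being a PLT component of the dlt pair $(Y,\Delta_Y)$ in char $p>5$, hence normal by \cite{HX15}) and $\pi|_S$ has connected fibres, $(\pi|_S)_*\OO_S$ is integrally closed in $K(W)$, and so $W$ is normal.

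For part (1) of Theorem \ref{adjunction}, I reuse $Y$, $S$ and $\pi$. Divisorial adjunction on $Y$ gives $(K_Y+\Delta_Y)|_S=K_S+\Delta_S$ with $(S,\Delta_S)$ sub-dlt, and $g:=\pi|_S\colon S\to W$ is a projective contraction from a normal surface to a curve. The canonical bundle formula for such surface fibrations in char $p>5$ (see \cite{Das15} and related work on $F$-singular surface fibrations) produces an effective $\Delta_W$ and a $\Q$-Cartier moduli part $M_W$ with
\[
K_S+\Delta_S\sim_{\Q} g^*(K_W+\Delta_W+M_W),
\]
and pulling back via $\pi$ gives the claimed $(K_X+\Delta)|_W\sim_{\Q} K_W+\Delta_W+M_W$. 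The dependence claim follows by tracking how the components of $\Delta$ land on $S$: components of $\Delta''$ do not contain $W$, so their strict transforms to $Y$ meet $S$ only in horizontal divisors over $W$, and they contribute only to the discriminant $\Delta_W$ via the log canonical threshold of the fibres, whereas the moduli part $M_W$ is determined by the vertical data on $S$ over $W$, which comes from $S$ itself and from the components of $\Delta'$, i.e., only from $(X,\Delta')$.

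For part (2), DCC on the coefficients of $\Delta$ transfers, via adjunction and the canonical bundle formula, to a DCC condition on the coefficients of $\Delta_W$ and to a uniform denominator bound on $M_W$; combined with $\chr k>\max\{5,\,2/\delta\}$, this lets me produce an effective $\Q$-divisor $M_W'\sim_{\Q}M_W$ by a base-point-free type argument applied to $g$ on $S$ in char $p>5$, and a direct coefficient check on the generic fibre then gives the KLT property of $(W,\Delta_W+M_W')$. The main obstacle I expect is the MMP bookkeeping in the first step: one must arrange simultaneously that $S$ is a single prime divisor, that $\pi|_S$ has connected fibres of dimension $\leq 1$, and that each step of $\pi$ is a pl-contraction to which Theorem A genuinely applies. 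A secondary difficulty, explaining why the DCC and characteristic hypotheses are needed in part (2) but not in part (1), is the absence of Hodge-theoretic semi-positivity for $M_W$ in positive characteristic; without these hypotheses one could not promote the $\Q$-linear equivalence class of $M_W$ to an effective KLT representative.
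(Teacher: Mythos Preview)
Your overall architecture matches the paper's, but there are two places where your description diverges from what actually works, and one place where you are making life harder than necessary.

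\textbf{Normality.} The paper does not run an MMP from a dlt model and then try to factor $\pi$ as a chain of pl-contractions glued by Leray. Instead it uses Reid's tie-breaking trick (\cite[8.7.1]{Cor07}) to arrange that $W$ is the \emph{unique} LC center with a \emph{unique} divisor $S$ of discrepancy $-1$, and then extracts that single divisor via \cite[7.7]{Bir13}. The resulting $f:(Y,S)\to X$ already has $\rho(Y/X)=1$, $-S$ is $f$-ample, and since $W\subset\supp\Delta$ one gets $-(K_Y+S)\equiv_f\Delta'$ $f$-ample; so $f$ is literally a pl-divisorial contraction and Theorem~A applies once. Your MMP route, if it terminates with a single exceptional divisor over $\Q$-factorial $X$, lands on exactly this same extraction, so your ``factor and Leray'' step is both unnecessary and ill-posed: the intermediate MMP steps are not pl-contractions for the fixed sheaf $\OO(-S)$, and Leray does not assemble the vanishing you want from them. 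Drop the factorization and observe that the output of your MMP \emph{is} the single pl-divisorial contraction.

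\textbf{Dependence of $M_W$.} You have the words ``horizontal'' and ``vertical'' interchanged. A component of $\Delta''$ does \emph{not} contain $W$, hence its strict transform meets $S$ only over a proper closed subset of $W$, i.e.\ in \emph{vertical} divisors; these feed into the discriminant $\Delta_W$. Components of $\Delta'$ contain $W$, hence restrict to \emph{horizontal} divisors on $S$, and it is the horizontal data on the generic fibre $(\P^1;\text{marked points})$ that determines the moduli part $M_W$. Your conclusion that $M_W$ depends only on $(X,\Delta')$ is correct, but the explanation as written is inverted. The paper handles this by first reducing to the case where every component of $\Delta$ contains $W$ (absorbing $\Delta''|_W$ into $\Delta_W$), and then proving in Lemma~4.9 that $\Delta_W$ is independent of the choice of the exceptional divisor $E$ dominating $W$.

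\textbf{Part (2).} Your ``base-point-free type argument'' is too vague and misses the actual mechanism. The paper proves a canonical bundle formula (Theorem~4.6) in which $M_W$ is shown to be \emph{semi-ample} by pulling back a semi-ample class $\mathcal{L}$ from the moduli space $\overline{\mathcal{M}}_{0,n}$ of pointed stable rational curves; the hypothesis $p>2/\delta$ is used precisely to guarantee that the finite base changes needed to make the horizontal components of $\Delta_S$ into sections are \emph{separable}, so that log pullback behaves well. Once $M_W$ is semi-ample on the smooth curve $W$, Bertini gives an $M_W'\sim_\Q M_W$ with $\lfloor M_W'\rfloor=0$ and support disjoint from $\Delta_W$; minimality of $W$ forces $\lfloor\Delta_W\rfloor=0$, hence $(W,\Delta_W+M_W')$ is KLT. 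There is no ``direct coefficient check on the generic fibre'' here: the KLT claim is on $W$, which is a curve.
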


All of the results in this article hold in characteristic $p>5$ unless stated otherwise. We will use the standard terminologies and notations from \cite{KM98}. We also use the abbreviations: LC for log canonical, KLT for Kawamata log terminal, PLT for purely log terminal, DLT for divisorially log terminal, NLC for non-log canonical centers, NKLT centers for non-Kawamata log terminal centers, and $\lct$ for log canonical thresholds. If $(X,\Delta )$ is LC, then the NKLT centers are also known as log canonical centers or LC centers.\\

{\bf Acknowledgements.} 
	This research  was partially supported by the National Science Foundation research grants No: DMS-$1300750$, DMS-$1265285$, FRG grant No. DMS-$1265261$, and Simons Grant Award No. $256202$.\\

\section{Properties of Log Canonical Centers}
In this section we establish some basic properties of LC centers.\\
\begin{lemma}\label{lcc-intersection}
Let $X$ be a $\Q$-factorial KLT $3$-fold and $(X, \Delta\>0)$ a log canonical pair. Let $W_1$ and $W_2$ be two log canonical centers of $(X, \Delta)$. Then every irreducible component of $W_1\cap W_2$ is a log canonical center of $(X, \Delta)$. 	
\end{lemma}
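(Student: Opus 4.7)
The plan is to reduce to the DLT case via a $\Q$-factorial dlt modification of $(X,\Delta)$, where the statement becomes combinatorial.

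First, I would invoke the existence of $\Q$-factorial dlt modifications for $3$-folds in characteristic $p>5$ (cf.\ \cite{Bir13, HX15}) to produce a proper birational crepant morphism $f:(Y,\Gamma)\to (X,\Delta)$ with $(Y,\Gamma)$ a $\Q$-factorial dlt pair satisfying $K_Y+\Gamma=f^*(K_X+\Delta)$, and with every $f$-exceptional divisor appearing as a component of $\lfloor\Gamma\rfloor$. Since $f$ is crepant, the LC places of $(X,\Delta)$ and of $(Y,\Gamma)$ coincide, and consequently the LC centers of $(X,\Delta)$ are exactly the images under $f$ of the LC centers of $(Y,\Gamma)$.

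For the DLT pair $(Y,\Gamma)$, the LC centers are precisely the strata of $\lfloor\Gamma\rfloor$, i.e., the irreducible components of intersections $\bigcap_{j\in J}G_j$ of components $G_j$ of $\lfloor\Gamma\rfloor$. In particular, an irreducible component of the intersection of two strata is again a stratum, so the analog of the lemma on $(Y,\Gamma)$ is immediate from the combinatorics of the reduced boundary. For the two given LC centers $W_1,W_2\subseteq X$, one then picks minimal strata $S_1,S_2$ of $\lfloor\Gamma\rfloor$ with $f(S_i)=W_i$.

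The crucial step is to show that each irreducible component $W$ of $W_1\cap W_2$ equals $f(S)$ for some irreducible component $S$ of $S_1\cap S_2$. For a general point $x\in W$, the fiber $f^{-1}(x)$ is connected by Zariski's main theorem and meets both $S_1$ and $S_2$; a local analysis near the generic point of $W$---after possibly refining the modification so that the relevant strata are in sufficiently general position over $W$---should yield an irreducible component of $S_1\cap S_2$ dominating $W$. This component is itself a stratum of $\lfloor\Gamma\rfloor$, hence an LC center of $(Y,\Gamma)$, and its image is an LC center of $(X,\Delta)$ equal to $W$.

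The main obstacle is precisely this transfer step: in general $f(S_1\cap S_2)\subsetneq f(S_1)\cap f(S_2)$, so one must work to ensure the dlt modification is ``fine enough'' over each component of $W_1\cap W_2$. The $\Q$-factoriality of $Y$, the connectedness of fibers of $f$, and the low-dimensional combinatorics specific to $3$-folds (LC centers have dimension $0$, $1$, or $2$, so the cases are $\dim W_i=2,2$; $2,1$; $1,1$ with $W$ a point or curve) are what make this transfer tractable.
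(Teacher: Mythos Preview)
Your proposal has a genuine gap at exactly the point you flag as ``the crucial step.'' You observe correctly that in general $f(S_1\cap S_2)\subsetneq f(S_1)\cap f(S_2)$, and then assert that a ``local analysis near the generic point of $W$'' together with connectedness of fibers ``should yield'' a stratum of $S_1\cap S_2$ dominating $W$. But this is not a transfer formality: it is precisely where the content of the lemma lives. Connectedness of $f^{-1}(x)$ says nothing about whether $S_1$ and $S_2$ meet inside that fiber; what you need is connectedness of the \emph{non-klt locus} over $W$, which in positive characteristic is not available for free in dimension $3$. Your choice of \emph{minimal} strata compounds the problem --- minimal strata over a codimension-$2$ center could be curves or points, and there is no reason for two such to intersect. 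You would want divisorial strata, and even then you must argue they meet over each component of $W_1\cap W_2$.

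The paper's proof is organized quite differently and avoids this gap. Rather than passing to a full dlt modification, it does a case analysis on $(\operatorname{codim} W_1,\operatorname{codim} W_2)$. In the base case $(1,1)$, one restricts to $W_1^n$ via adjunction, localizes at the generic point of a component of $W_1\cap W_2$, and is reduced to a \emph{surface} computation where relative Kawamata--Viehweg vanishing and hence Koll\'ar's connectedness theorem \emph{do} hold in characteristic $p>0$. For the mixed case $(1,2)$ and the case $(2,2)$, one uses \cite[7.7]{Bir13} to extract \emph{one} (resp.\ \emph{two}) specific exceptional divisor(s) $E'$ with center $W_2$ (resp.\ $W_1,W_2$), keeping $\rho(X'/X)$ small. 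On the extracted model the relevant divisors are $\Q$-Cartier, so their intersection is a curve, and since the strict transform of $W_1$ (resp.\ $E_1'$) surjects onto $W_1$ while $E'$ surjects onto $W_2$, every component of $W_1\cap W_2$ is dominated by a component of the divisorial intersection upstairs; one then invokes Case~I. The point is that the surgical extraction guarantees the needed intersection directly, whereas on a full dlt model you would still have to \emph{prove} that suitable strata meet --- and that proof would amount to redoing the paper's surface-connectedness reduction.
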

\begin{proof}
There are three cases depending on the codimension of $W_1$ and $W_2$.\\

\tbf{Case I}: \textit{ $\codim _X W_1=\codim _X W_2=1$}. In this case $W_1$ and $W_2$ are components of $\Delta$. Let $\Delta=W_1+W_2+\bar{\Delta}$. Then by adjunction we have $$(K_X+W_1+W_2+\bar{\Delta})|_{W_1^n}=K_{W_1^n}+\text{ Diff}_{W_1^n}(\bar{\Delta})+W_2|_{W_1^n},$$ where $W_1^n\to W_1$ is the normalization. By localizing at the generic point of an irreducible component of $W_1\cap W_2$ we reduce to a surface problem. Now, on a surface in characteristic $p>0$, the relative Kawamata-Viehweg vanishing and Koll\'ar's connectedness theorem hold (see \cite[10.13]{Kol13} and \cite[3.1]{Das15}). Thus on a surface the intersection of two LC centers is a LC center and we are done by the usual argument (cf. \cite[Proposition 1.5]{Kaw95}).\\

\tbf{Case II}: \textit{$\codim _X W_1=1$ and $\codim _XW_2=2$}. Since $X$ is $\Q$-factorial, $(X, (1-\e)\Delta)$ is KLT for any $0<\e<1$. Thus by \cite[7.7]{Bir13}, there exists a $\Q$-factorial model $f': X'\to X$ of relative Picard number $\rho(X'/X)=1$ such that $\mbox{Ex}(f')$ is the unique exceptional divisor $E'$ with center $W_2$, and
\begin{equation}\label{X'-intersection}
K_{X'}+E'+W_1'+\Delta'=f'^*(K_X+\Delta),
\end{equation} 
where $\Delta'\>0$, and $W_1'$ is the strict transform of $W_1$ under $f'$.\\

Since $W_1'$ and $E'$ are $\Q$-Cartier, they intersect along a curve (possibly reducible). Let $C'$ be an irreducible component of $W_1'\cap E'$. Then by Case I, $C'$ is a LC center of $(X', E'+W_1'+\Delta'\>0)$. Since every irreducible component of $W_1\cap W_2$ is dominated by an irreducible component of $W_1'\cap E'$, we are done by relation \eqref{X'-intersection}. \\

\tbf{Case III}: \textit{$\codim _XW_1=\codim _XW_2=2$}. Again, since $X$ is $\Q$-factorial, $(X, (1-\e)\Delta)$ is KLT for any $0<\e<1$. Thus by \cite[7.7]{Bir13}, there exists a $\Q$-factorial model $f': X'\to X$ extracting two exceptional divisors (one at a time) $E_1'$ and $ E_2'$ such that $E_1'\cap E_2'\neq \emptyset, f'(E_1')=W_1$ and $f'(E_2')=W_2$, and 
\begin{equation}\label{X'-2intersection}
K_{X'}+E_1'+E_2'+\Delta'=f'^*(K_X+\Delta).
\end{equation}
Since $E_1'$ and $E_2'$ are $\Q$-Cartier, they intersect along a curve (possibly reducible). Let $C'$ be an irreducible component of $E_1'\cap E_2'$. Then by Case I,  $C'$ is a LC center of $(X', E_1'+E_2'+\Delta'\>0)$. Since every irreducible component of $W_1\cap W_2$ is dominated by an irreducible component of $E_1'\cap E_2'$, we are done by relation \eqref{X'-2intersection}.
\end{proof}

The following proposition is a characteristic $p>5$ version of Fujino's adjunction theorem for DLT pairs (see \cite[3.9.2]{Cor07} and \cite[4.16]{Kol13}) on a $\Q$-factorial $3$-fold.\\

\begin{proposition}[DLT Adjunction]\label{dlt-adjunction}
Let $(X, \Delta\>0)$ be a $\Q$-factorial DLT $n$-fold with $n\leq 3$ such that $\Delta=D_1+D_2+\cdots+D_r+B$ and $\lrd\Delta\rrd=D_1+D_2+\cdots+D_r$, where the $D_i$'s are prime divisors. Assume that $\chr p>5$. Then the following hold:
\begin{enumerate}
\item The $s$-codimensional log canonical centers of $(X, \Delta)$ are exactly the irreducible components of the various intersections $D_{i_1}\cap\cdots\cap D_{i_s}$ for some $\{i_1,\dots, i_s\}\subset \{1,\dots, r\}$.
\item Every irreducible component of $D_{i_1}\cap\cdots\cap D_{i_s}$ is normal and of pure codimension $s$.
\item Let $W$ be a log canonical center of $(X, \Delta)$, then there exists an effective $\Q$-divisor $\Delta_W\>0$ on $W$ such that $(K_X+\Delta)|_W\sim_\Q K_W+\Delta_W$ and $(W, \Delta_W)$ is DLT.
\item If $D_i\cap D_j=\emptyset$ for all $i\neq j$, then $(X, \Delta)$ is in fact PLT.	
\end{enumerate}	
\end{proposition}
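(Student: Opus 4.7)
The plan is to argue by induction on the codimension $s$, with the base case itself an induction on the dimension $n\leq 3$. The main inputs will be Lemma~\ref{lcc-intersection} on intersections of LC centers, a perturbation trick converting the DLT pair to PLT near each $D_i$, the identification of PLT $3$-folds and surfaces in characteristic $p>5$ with purely $F$-regular pairs from \cite{HX15}, and divisorial adjunction.

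For the base case $s=1$, I would fix $D_i$ and consider the perturbed pair $(X, D_i + (1-\varepsilon)(\Delta - D_i))$ for small $\varepsilon>0$. Since $(X,\Delta)$ is DLT, a small shift of the fractional coefficients keeps all exceptional discrepancies strictly above $-1$, so this pair is PLT with $D_i$ as its unique non-KLT center. By \cite{HX15} it is then purely $F$-regular in a neighbourhood of $D_i$, which forces $D_i$ to be normal. Letting $\varepsilon\to 0$, divisorial adjunction yields $(K_X+\Delta)|_{D_i}\sim_{\Q}K_{D_i}+\Delta_{D_i}$ with $(D_i,\Delta_{D_i})$ DLT of dimension $n-1$ and $\lfloor\Delta_{D_i}\rfloor$ containing $D_j|_{D_i}$ for each $j\neq i$ with $D_j\cap D_i\neq\emptyset$.

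For the inductive step, given a component $W$ of $D_{i_1}\cap\cdots\cap D_{i_s}$ I would pick a codimension-$(s-1)$ component $W'\supseteq W$ of $D_{i_1}\cap\cdots\cap D_{i_{s-1}}$. By the inductive hypothesis, $W'$ is normal of pure codimension $s-1$, $(W',\Delta_{W'})$ is DLT of dimension $n-s+1\leq 2$, and $W$ lies in a component of $\lfloor\Delta_{W'}\rfloor$ coming from $D_{i_s}|_{W'}$. Applying the base case to $(W',\Delta_{W'})$ then supplies the normality and pure codimension needed for (2), together with the effective $\Delta_W$ and the DLT structure $(W,\Delta_W)$ with $(K_X+\Delta)|_W\sim_{\Q}K_W+\Delta_W$ required for (3). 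For (1), Lemma~\ref{lcc-intersection} applied iteratively shows every such stratum is an LC center, and conversely every LC center of a DLT pair is a stratum of $\lfloor\Delta\rfloor$ by the definition of DLT via a suitable log resolution. Part (4) is then immediate: if the $D_i$ are pairwise disjoint there are no codim-$\geq 2$ intersection components, hence no codim-$\geq 2$ LC centers, which together with DLT is precisely the PLT condition.

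The principal obstacle will be the normality of each $D_i$ in characteristic $p>5$. Whereas the characteristic-zero argument passes through Kawamata--Viehweg vanishing and Koll\'ar's injectivity, here we must route everything through the $F$-singularity theory of \cite{HX15}, and this is precisely the point where the hypothesis $\chr p>5$ enters the proof.
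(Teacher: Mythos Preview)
Your overall strategy matches the paper's: establish normality of each $D_i$ via $F$-singularity theory, then pass by adjunction to the surface $(D_i,\Delta_{D_i})$ and finish by induction on dimension. Two points deserve correction.

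First, the perturbation to $(X, D_i + (1-\varepsilon)(\Delta-D_i))$ is unnecessary and your key sentence ``by \cite{HX15} it is then purely $F$-regular'' hides a real issue. The results in \cite{HX15} that deliver pure $F$-regularity near $D_i$ (and hence normality) go through the restricted surface pair $(D_i^n,\operatorname{Diff}_{D_i^n}(B'))$: one needs this pair to be strongly $F$-regular, and in characteristic $p>5$ the available tool is Hara's theorem \cite{Har98}, which requires standard coefficients. The Different of your perturbed boundary $(1-\varepsilon)(\Delta-D_i)$ does \emph{not} have standard coefficients, so the implication as written is unjustified. The paper fixes this by using the simpler subpair $(X,D_i)$, which is already PLT since $X$ is $\mathbb Q$-factorial; then $\operatorname{Diff}_{D_i^n}(0)$ has standard coefficients, \cite{Har98} and \cite[3.1]{HX15} give strong $F$-regularity on the surface, and \cite[4.1]{HX15}, \cite[4.1,5.4]{Das15} yield normality of $D_i$. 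Your argument is easily repaired by making this substitution.

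Second, to run the induction you need $(D_i,\Delta_{D_i})$ to again satisfy the hypotheses of the proposition, in particular $\mathbb Q$-factoriality of $D_i$; the paper supplies this via \cite[6.3]{FT12}. For part (1) the paper simply cites the argument of \cite[4.16]{Kol13} rather than Lemma~\ref{lcc-intersection}; your route is fine, but note that lemma assumes $X$ is KLT, which you should observe follows from $(X,\Delta)$ being $\mathbb Q$-factorial DLT.
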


\begin{proof} The result is well known in dimension $\leq 2$.
(1) follows from the proof in \cite[Theorem 4.16]{Kol13}.

Since $X$ is $\Q$-factorial, $(X, D_i)$ is also PLT and then by adjunction $(D^n_i, \text{Diff}_{D^n_i})$ is KLT, where $D^n_i\to D_i$ is the normalization. Since $\mbox{Diff}_{D^n_i}$ has standard coefficients, by \cite{Har98} and \cite[3.1]{HX15},  $(D^n_i, \text{Diff}_{D^n_i})$ is strongly $F$-regular in characteristic $p>5$. Then by \cite[4.1]{HX15} and \cite[4.1, 5.4]{Das15}, $D_i$ is normal. This proves that every irreducible component of $\lrd \Delta\rrd$ is normal and hence $(2)$ holds for $s=1$.\\

It is easy to see that $(D_i, \text{Diff}_{D_i}(\Delta-D_i))$ 
 is DLT, and so $D_i$ is a $\Q$-factorial surface by \cite[6.3]{FT12}. (2) and (3) now follow from the result in dimension $2$. (4) is immediate.
\end{proof}

\section{Vanishing Theorem and Minimal Log Canonical Centers}
In this section we will prove a relative vanishing theorem and then use it to prove the normality of minimal log canonical centers.\\

\begin{definition}\label{d-pl}
	Let $f:X\to Z$ be a projective birational morphism between normal quasi-projective varieties with relative Picard number $\rho(X/Z)=1$. Let $(X, S+B\>0)$ be a $\Q$-factorial PLT pair such $\lrd S+B\rrd=S$ is irreducible, and $-S$ and $-(K_X+S+B)$ are both $f$-ample. 
\begin{enumerate}
	\item If $\dim \ex(f)=\dim X-1$, then $f:X\to Z$ is called a \emph{pl-divisorial contraction}.
	\item If $\dim \ex(f)<\dim X-1$, then $f:X\to Z$ is called a \emph{pl-flipping contraction}.
	\end{enumerate}	 
	\end{definition}

\begin{proposition}\label{prop:surjective}
Let $(X, S+B)$ be a $\Q$-factorial $3$-fold PLT pair, where $S$ is a prime Weil divisor. Assume that $(p^e-1)(K_X+S+B)$ is an integral Weil divisor for some $e>0$. Then there exists an integer $e_0\gg 0$ such that the following sequence
\begin{equation}\label{seq:e_01}
	\xymatrix{
	0\ar[r]&  \mathcal{B}_{ne_0}\ar[r]& F^{ne_0}_*\OO_X((1-p^{ne_0})(K_X+B)-p^{ne_0}S)\ar[r]^-{\phi_{ne_0}}& \OO_X(-S)\ar[r]& 0}
	\end{equation}
is exact at all codimension $2$ points of $X$ contained in $S$,	for all $n\>1$, where $\phi_{ne_0}$ is defined by the trace map (see \cite{Sch14} and \cite{Pat14}) and $\mcB_{ne_0}$ is the kernel of $\phi_{ne_0}$.
\end{proposition}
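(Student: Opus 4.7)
The plan is to verify the surjectivity (and hence exactness) of the displayed sequence stalk-by-stalk at each codimension $2$ point $x$ of $X$ contained in $S$. Since $\mcB_{ne_0}$ is defined as the kernel of $\phi_{ne_0}$, the only nontrivial exactness to check is the surjectivity of $\phi_{ne_0}$. After localizing at $x$, the $3$-fold PLT pair $(X, S+B)$ becomes a $2$-dimensional local PLT pair $(\OO_{X,x}, (S+B)_x)$, and the map $\phi_{ne_0}$ reduces to a local Frobenius trace map attached to this surface pair. The problem is thereby reduced to a local question about surface pairs.

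For the local surjectivity, I would invoke the correspondence between PLT and purely $F$-regular for $2$-dimensional pairs in characteristic $p>5$, established in \cite[3.1]{HX15} and based on \cite{Har98}. The purely $F$-regular property, by the characterization via Schwede--Patakfalvi trace maps (see \cite{Sch14}, \cite{Pat14}), is precisely equivalent to the surjectivity of a trace map of the form appearing in \eqref{seq:e_01} at the point $x$, for some $e_x > 0$. Thus for each codimension $2$ point $x\in S$, there exists $e_x$ making $\phi_{e_x}$ surjective at $x$.

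To find a single $e_0$ working for all relevant codimension $2$ points simultaneously, I would note that at a generic codimension $2$ point of $X$ in $S$ (the generic point of a curve $C\subset S$ not meeting $\operatorname{Sing}(X)$ nor any component of $B$ other than $S$), the pair locally looks like a regular surface with a smooth irreducible boundary divisor, and the trace map is immediately surjective by a direct computation. Hence only finitely many bad points $x_1,\dots, x_k$ need individual treatment. Choosing $e_0$ as a common multiple of $e$ (to keep $(p^{ne_0}-1)(K_X+S+B)$ integral for all $n$) and of the corresponding $e_{x_i}$ yields simultaneous surjectivity of $\phi_{e_0}$ at all bad points. Extension to general $n\geq 1$ then follows from the fact that $\phi_{ne_0}$ factors as the $n$-fold composition of trace maps at level $e_0$ (after twisting by appropriate Frobenius pushforwards), and a composition of surjective maps of sheaves is surjective.

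The main obstacle I anticipate is the compatibility between the various formulations of the trace map. One must verify that the map $\phi_{ne_0}$ in the proposition, defined via the Grothendieck--Schwede--Patakfalvi construction, agrees under localization with the trace map used to characterize purely $F$-regularity in \cite{HX15} and \cite{Har98}, and that the integrality hypothesis on $(p^e-1)(K_X+S+B)$ is compatible with the local picture. A secondary technical subtlety is handling components of $B$ that pass through $x$: these contribute nontrivially to the local pair, but because $(X, S+B)$ is PLT, their coefficients along $S$ are strictly less than $1$, which is exactly what is needed for the $2$-dimensional PLT-to-purely-$F$-regular transfer in characteristic $p>5$.
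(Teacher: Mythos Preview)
Your overall strategy—localize at a codimension $2$ point of $X$ in $S$ to reduce to a surface question, then invoke $F$-singularity theory to get surjectivity of the trace—is the same as the paper's. However, the key step differs in a way that matters.

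You propose to apply directly the correspondence ``PLT $\Leftrightarrow$ purely $F$-regular'' for $2$-dimensional pairs, citing \cite[3.1]{HX15}. But that reference (as the paper itself uses it elsewhere) concerns \emph{KLT} surface pairs with \emph{standard coefficients} being strongly $F$-regular in characteristic $p>5$; it does not directly yield the PLT/purely $F$-regular statement for arbitrary boundary coefficients, and the proposition places no standard-coefficient hypothesis on $B$. So as written, your citation does not cover the case at hand.

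The paper avoids this by restricting one step further: after localizing at the codimension $2$ point, it passes by adjunction to the curve $S$, where $(S, B_S)$ is KLT and $S$ is smooth, so strong $F$-regularity is automatic (via \cite[2.2]{HX15}) with no coefficient restriction. Surjectivity of the trace on $X$ is then lifted from $S$ using a commutative diagram and Nakayama's lemma in the local ring, after which one twists by $\OO_X(-S)$ and takes reflexive hulls to obtain $\phi_{ne_0}$. To make the restriction-to-$S$ step work at the level of integral divisors, the paper first perturbs by a small auxiliary effective $\Q$-Cartier divisor $G$ so that $K_X+S+B+G$ has index not divisible by $p$—a device you do not mention, though you do correctly flag trace-map compatibility as a potential obstacle.

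Your explicit treatment of the uniformity of $e_0$ (only finitely many ``bad'' codimension $2$ points, then compose trace maps) is cleaner than the paper's pointwise argument and is a genuine addition. But the central issue remains: the surface-level $F$-regularity input you cite is not available off the shelf for non-standard coefficients, and supplying it amounts to carrying out the adjunction-to-the-curve argument that the paper performs explicitly.
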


\begin{proof}
By Proposition \ref{dlt-adjunction}, $S$ is normal. Since the question is local on $X$, we may assume that $X$ is affine. Then by \cite[2.13]{HX15}, we can choose an effective $\Q$-Cartier divisor $G\>0$ not containing $S$ and with sufficiently small coefficients such that $K_X+S+B+G$ is $\Q$-Cartier with index not divisible by $p$.\\

Localizing $X$ at a codimension $2$ point of $X$ contained in $S$, we may assume that $X$ is an excellent surface. Then by adjunction we have $(K_X+S+B+G)|_S=K_S+B_S+G|_S$, where $B_S$ is the Different. Since $(X, S+B)$ is PLT, $(S, B_S)$ is KLT by adjunction. Now, $(S, B_S)$ is strongly $F$-regular by \cite[2.2]{HX15}, since $S$ is a smooth curve. Since the coefficients of $G$ are sufficiently small, $(S, B_S+G|_S)$ is also strongly $F$-regular. Therefore we get the following surjection
		\begin{equation*}
			F^e_*\OO_S((1-p^e)(K_S+B_S+G|_S))\surjective \OO_S,
		\end{equation*}
		for all $e\gg 0$ and sufficiently divisible.\\

		We have the following commutative diagram
		\begin{equation}\label{diagram-1-pl}
			\xymatrix{ F^e_*\OO_X((1-p^e)(K_X+S+B+G))\ar[d]\ar@{->>}[r]& F^e_*\OO_S((1-p^e)(K_S+B_S+G|_S))\ar@{->>}[d]\\
			\OO_X\ar@{->>}[r]&  \OO_S }
		\end{equation}
		To see the surjectivity of the top arrow note that since $F_*^e$ is exact, it suffices to show that $(1-p^e)(K_X+S+B+G)|_S=(1-p^e)(K_S+B_S+G|_S)$, and since  $(1-p^e)(K_X+S+B+G)$ and $(1-p^e)(K_S+B_S+G|_S)$ are Cartier for $e\gg 0$, it suffices to show that this equality holds at codimension $1$ points of $S$, but this is clear since  $(K_X+S+B+G)|_S=K_S+B_S+G|_S$.
		Since the ring $\OO_X$ is local, the surjectivity of the second vertical map (along with Nakayama's Lemma) implies the surjectivity of the first vertical map, i.e.,
		\begin{equation}\label{OX-onto-pl}
			\xymatrix{F^{ne_0}_*\OO_X((1-p^{ne_0})(K_X+S+B+G))\ar@{->>}[r]& \OO_X }
		\end{equation}
		is surjective for all $n\>1$, where $e_0\gg 0$ is sufficiently divisible.\\
		
		Since the map \eqref{OX-onto-pl} factors through $F^{ne_0}_*\OO_X((1-p^{ne_0})(K_X+B))$, we get the following surjectivity
		\begin{equation}\label{KY-onto-pl}
			\xymatrix{F^{ne_0}_*\OO_X((1-p^{ne_0})(K_X+B))\ar@{->>}[r]^-{\psi_{ne_0}}& \OO_X. }
		\end{equation}
		Let $s$ be a pre-image of $1$ under $\psi_{ne_0}$, then we get the following splitting of $\psi_{ne_0}$
		 \begin{equation}\label{Psi-splitting-pl}
		\xymatrix{\OO_X\ar[r]^-{\blank \cdot s} & F^{ne_0}_*\OO_X((1-p^{ne_0})(K_X+B))\ar[r]^-{\psi_{ne_0}}& \OO_X. }
		\end{equation}
		Twisting \eqref{Psi-splitting-pl} by $\OO_X(-S)$ and taking reflexive hulls we get the following splitting
		\begin{equation}\label{S-splitting-pl}
			\xymatrix{\OO_X(-S)\ar[r]& F^{ne_0}_*\OO_X((1-p^{ne_0})(K_X+B)-p^{ne_0}S)\ar[r]& \OO_X(-S).}
		\end{equation}
		In particular the morphism 
		\begin{equation}\label{final-onto-pl}
			\xymatrix{F^{ne_0}_*\OO_X((1-p^{ne_0})(K_X+B)-p^{ne_0}S)\ar@{->>}[r]& \OO_X(-S)} 
		\end{equation}
	is surjective for all $n\>1$.\\


	
	\end{proof}
\begin{remark}
	In Proposition \ref{prop:surjective}, if we further assume that the coefficients of $B$ are in the standard set $I=\{1-\frac{1}{n}: n\>1\}$, then it follows that the sequence \eqref{seq:e_01} is exact at all codimension $2$ points of $X$. Indeed, by localizing at a codimension $2$ point $P\in X\setminus S$, we may assume that $(X, B)$ is an excellent surface. In this case the RHS of the sequence \eqref{seq:e_01} takes the following form
\begin{equation}\label{seq:not-S}
	\xymatrixcolsep{3pc}\xymatrix{F^{ne_0}_*\OO_X((1-p^{ne_0})(K_X+B))\ar[r]^-{\phi_{ne_0}}& \OO_X.}
	\end{equation} 
Since $(X, B)$ is a PLT surface and $\lrd B\rrd=0$, $(X, B)$ is KLT. Thus by \cite{Har98} and \cite[3.1]{HX15}, $(X, B)$ is strongly $F$-regular in $\chr\ p>5$. Since the coefficients of $G$ are sufficiently small, $(X, B+G)$ is also strongly $F$-regular. Therefore we get the following surjectivity
\begin{equation}\label{seq:not-S-sur}
\xymatrixcolsep{3pc}\xymatrix{F^{ne_0}_*\OO_X((1-p^{ne_0})(K_X+B+G))\ar@{->>}[r]& \OO_X,}
	\end{equation}
for some $e_0\gg 0$ and sufficiently divisible, and for all $n\>1$.\\
Since the map in \eqref{seq:not-S-sur} factors through $F^{ne_0}_*\OO_X((1-p^e)(K_X+B))$, we get the following surjectivity
\begin{equation}
\xymatrixcolsep{3pc}\xymatrix{F^{ne_0}_*\OO_X((1-p^{ne_0})(K_X+B))\ar@{->>}[r]^-{\phi_{ne_0}}& \OO_X.}
\end{equation} 
\end{remark}	

\begin{remark}
	In Proposition \ref{prop:surjective}, we can further show that the sequence \eqref{seq:e_01} is exact at all codimension $3$ points of $X$ contained in $S$, if the coefficients of $B$ are in the standard set $I=\{1-\frac{1}{n}: n\>1\}$. Indeed, by localizing at a codimension $3$ point of $X$ contained in $S$, we may assume that $X$ is an excellent $3$-fold. Then $(S, B_S)$ is a KLT surface pair. By \cite{Har98} and \cite[3.1]{HX15}, $(S, B_S)$ is strongly $F$-regular in $\chr\ p>5$. The rest of the proof runs without any changes.\\
	\end{remark}


\begin{theorem}\label{thm:pl-vanishing}
	Let $f:(X, S+B\>0)\to Z$ be either a pl-divisorial contraction or a pl-flipping contraction. If the maximum dimension of the fibers of $f$ is $1$, then $R^if_*\OO_X(-S)=0$ for all $i>0$.
	\end{theorem}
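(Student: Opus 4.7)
Since the fibers of $f$ have dimension at most $1$, $R^if_*\OO_X(-S)=0$ for $i\>2$ is automatic; it suffices to show $R^1f_*\OO_X(-S)=0$. The plan is to combine Proposition \ref{prop:surjective} with relative Serre vanishing, after verifying that the cokernel of the trace map is $f$-finite.

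Working locally on $Z$, I would choose $e_0\gg 0$ sufficiently divisible so that $(p^{e_0}-1)(K_X+S+B)$ is an integral Cartier divisor, and invoke Proposition \ref{prop:surjective} to obtain, for every $n\>1$, a morphism
\[
\phi_{ne_0}\colon \mathcal{G}_n:=F^{ne_0}_*\OO_X\bigl((1-p^{ne_0})(K_X+B)-p^{ne_0}S\bigr)\rightarrow \OO_X(-S)
\]
surjective at every codimension $2$ point of $X$ contained in $S$. Since $\phi_{ne_0}$ is also surjective at every codimension $1$ point (standard smooth trace argument applied to the DVR localization, using that $\lrd B\rrd=0$), the cokernel $\mathcal{C}$ is supported in codimension $\>2$. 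Because $-S$ is $f$-ample, any curve $C$ contracted by $f$ satisfies $S\cdot C<0$ and so must be contained in $S$; thus $\ex(f)\subset S$. Combining this with Proposition \ref{prop:surjective} shows that $\supp(\mathcal{C})$ consists of a zero-dimensional subset of $S$ together with curves and points in $X\setminus S$, none of which is contracted by $f$. Hence $f|_{\supp(\mathcal{C})}$ is quasi-finite, thus finite by properness, so $R^if_*\mathcal{C}=0$ for $i\>1$.

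Rewriting $(1-p^{ne_0})(K_X+B)-p^{ne_0}S=-S-(p^{ne_0}-1)(K_X+S+B)$ and using that $-(K_X+S+B)$ is $f$-ample, relative Serre vanishing yields $R^if_*\OO_X(-S-(p^{ne_0}-1)(K_X+S+B))=0$ for $i\>1$ and $n\gg 0$. Since $F^{ne_0}_*$ is exact (Frobenius is affine), this gives $R^if_*\mathcal{G}_n=0$ as well. Finally, setting $\mathcal{K}=\ker\phi_{ne_0}$ and $\mathcal{I}=\im\phi_{ne_0}$, the long exact sequence of $0\rightarrow\mathcal{K}\rightarrow\mathcal{G}_n\rightarrow\mathcal{I}\rightarrow 0$ combined with $R^1f_*\mathcal{G}_n=0$ and $R^2f_*\mathcal{K}=0$ (automatic from $1$-dim fibers) gives $R^1f_*\mathcal{I}=0$; the long exact sequence of $0\rightarrow\mathcal{I}\rightarrow\OO_X(-S)\rightarrow\mathcal{C}\rightarrow 0$ together with $R^1f_*\mathcal{C}=0$ then yields $R^1f_*\OO_X(-S)=0$. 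The main subtlety is the $f$-finiteness of $\supp(\mathcal{C})$, which hinges on Proposition \ref{prop:surjective} (surjectivity at codimension $2$ points inside $S$) together with the geometric observation that $\ex(f)\subset S$.
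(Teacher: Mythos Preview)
Your argument follows the paper's approach: factor the trace map into kernel/image and image/cokernel short exact sequences, kill $R^if_*$ of the Frobenius pushforward by relative Serre vanishing, use the fiber-dimension bound to dispose of $R^2f_*$ of the kernel, and show the cokernel has vanishing higher direct images. The only structural difference is that the paper first restricts to a neighborhood of $S$ (using $\ex(f)\subset S$) so that the cokernel $\mathcal{Q}_e$ is supported on finitely many closed points, whereas you keep all of $X$ and argue that $\supp(\mathcal{C})$ is $f$-finite; both routes are valid.

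One technical slip worth flagging: you cannot in general arrange for $(p^{e_0}-1)(K_X+S+B)$ to be \emph{Cartier}. Perturbing the coefficients of $B$ controls only the Weil index, while the Cartier index of $K_X+S+B$ depends on the local class groups of $X$ and may well be divisible by $p$; in that case no $p^{e_0}-1$ is a multiple of it. The paper handles this by writing $p^e-1=rk+b$ with $r$ the Cartier index and $0\le b<r$, so that
\[
\OO_X\bigl((1-p^e)(K_X+B)-p^eS\bigr)\cong \OO_X\bigl(-S-b(K_X+S+B)\bigr)\otimes\OO_X(krH),
\]
where $rH=-r(K_X+S+B)$ is Cartier and $f$-ample and the first factor ranges over finitely many reflexive sheaves as $b$ varies; relative Serre vanishing then gives the needed $R^if_*=0$ for $e\gg 0$. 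With this adjustment your proof goes through and is essentially identical to the paper's.
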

\begin{proof} Since $X$ is $\Q$-factorial, by perturbing the coefficients of $B$ we may assume that $(p^e-1)(K_X+S+B)$ is an integral Weil divisor for some $e>0$. 
	Since $f$ is birational and $\ex(f)\subset \supp(S)$, it is enough to show that $R^if_*\OO_X(-S)=0$ in a neighborhood of $f(S)$. Thus by restricting $(X, S+B)$ on a suitable neighborhood of $S$ and by Proposition \ref{prop:surjective}, we may assume that the following sequence is exact at all codimension $2$ points of $X$
	\begin{equation}\label{S-ses-pl}
		\xymatrix{
		0\ar[r]&  \mathcal{B}_e\ar[r]& F^e_*\OO_X((1-p^e)(K_X+B)-p^eS)\ar[r]^-{\phi_e}& \OO_X(-S)\ar[r]& 0,}
	\end{equation}
	for all $e\gg 0$ and sufficiently divisible.\\

 The sequence \eqref{S-ses-pl} can be split into the following two exact sequences 
	\begin{equation}\label{split-1-pl}
		\xymatrix{0\ar[r]&  \mathcal{B}_e\ar[r]& F^e_*\OO_X((1-p^e)(K_X+B)-p^eS)\ar[r]^-{\phi_e}\ar[r]& \im(\phi_e)\ar[r]& 0}
	\end{equation}  
	\[\mbox{and} \]
	\begin{equation}\label{split-2-pl}
		\xymatrix{0\ar[r]& \im(\phi_e)\ar[r]& \OO_X(-S)\ar[r]& \mathcal{Q}_e\ar[r]& 0,}
	\end{equation}
	where $\mathcal{Q}_e$ is the corresponding quotient.\\

	Pushing forward the exact sequence \eqref{split-1-pl} by $f_*$ we get
	\begin{equation}\label{zero-1-pl}
		R^if_*(F^e_*\OO_X((1-p^e)(K_X+B)-p^eS))\to R^if_*\im(\phi_e) \to R^{i+1}f_*\mathcal{B}_e.
	\end{equation}
	Now $R^{i+1}f_*\mathcal{B}_e=0$ for all $i>0$, since the maximum dimension of the fiber of $f$ is $1$.\\	
Let $r$ be the index of $K_X+S+B$ and $H=-(K_X+S+B)$. By the division algorithm, there exist integers $k\>0$ and $0\<b<r$ such that $(p^e-1)=r\cdot k+b$. Then by Serre vanishing 
	\[R^if_*(F^e_*\OO_X((1-p^e)(K_X+B)-p^eS))=F^e_*(R^if_*\OO_X(k\cdot rH-b(K_X+S+B)-S))=0, \] 
	for all $e\gg 0$ and sufficiently divisible, and $i>0$, since $H$ is $f$-ample.\\

	Thus from \eqref{zero-1-pl} we get
	\begin{equation}\label{Im-0-pl}
		R^if_*\im(\phi_e)=0,
	\end{equation}
for all $i>0$.\\

	Again, pushing forward the exact sequence \eqref{split-2-pl}  by $f_*$ we get
	\begin{equation}\label{zero-2-pl}
		R^if_*\im(\phi_e)\to R^if_*\OO_X(-S)\to R^if_*\mathcal{Q}_e.
	\end{equation}
	$R^if_*\mathcal{Q}_e=0$ for all $i>0$, since $\mathcal{Q}_e$ is supported at finitely many points, by \eqref{S-ses-pl}. Thus we have
	\begin{equation}\label{vanishing-proved-d}
	R^if_*\OO_X(-S)=0,
	\end{equation}
for all $i>0$.
	\end{proof}

\begin{theorem}\label{normality}
Let $(X, \Delta)$ be a $\Q$-factorial $3$-fold log canonical pair such that $X$ has KLT singularities. If $W$ is a minimal log canonical center of $(X, \Delta)$, then $W$ is normal.  
\end{theorem}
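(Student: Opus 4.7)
If $W$ is a closed point the statement is trivial, so assume $\dim W \in \{1,2\}$. The plan is to pass to a $\Q$-factorial DLT modification $\pi\colon Y\to X$, produce a normal LC center $W' \subseteq Y$ that maps birationally onto $W$, and then descend normality of $W'$ to $W$ using the relative vanishing of Theorem~\ref{thm:pl-vanishing}.

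Using the MMP for $3$-folds in characteristic $p > 5$ from \cite{Bir13,HX15}, I would first construct a $\Q$-factorial DLT modification $\pi\colon Y \to X$ with $K_Y + \Gamma = \pi^*(K_X + \Delta)$, where $(Y,\Gamma)$ is DLT and every $\pi$-exceptional prime divisor appears in $\lrd \Gamma \rrd$; if necessary, I refine $\pi$ so that the strict transform of $W$ is a connected component of some intersection of divisorial components of $\lrd \Gamma \rrd$. By Lemma~\ref{lcc-intersection} applied on $Y$ and by the minimality of $W$, I then pick a minimal LC center $W' \subseteq Y$ of $(Y,\Gamma)$ with $\pi(W') = W$ and $\pi|_{W'}\colon W' \to W$ projective and birational. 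By Proposition~\ref{dlt-adjunction}(2), $W'$ is normal.

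Since $W'$ is normal and $\pi|_{W'}$ is projective birational, $(\pi|_{W'})_*\OO_{W'}$ equals the normalization sheaf $\OO_{W^n}$ of $\OO_W$, so it suffices to prove that $\OO_W \to (\pi|_{W'})_*\OO_{W'}$ is an isomorphism. Combined with $\pi_*\OO_Y=\OO_X$, this reduces to the vanishing $R^1\pi_*\II_{W'/Y}=0$: when $W'$ has codimension one in $Y$ it is simply $R^1\pi_*\OO_Y(-W')=0$; when $W'$ has codimension two, I restrict first to a divisorial component $S \supseteq W'$ of $\lrd \Gamma \rrd$ and apply Proposition~\ref{dlt-adjunction}(3) to reduce the second step to surface adjunction on $S$. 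To obtain these vanishings I decompose $\pi$ into a sequence of pl-divisorial contractions and pl-flips by running a suitable MMP over $X$, for instance the $(K_Y + \Gamma - \e S)$-MMP for $\e \ll 1$ and a well-chosen $S \subseteq \lrd \Gamma \rrd$; since $K_Y+\Gamma$ is $\pi$-trivial, each contracted extremal ray is a pl-ray in the sense of Definition~\ref{d-pl}, so Theorem~\ref{thm:pl-vanishing} applies at each step $f_i$ to yield $R^j(f_i)_*\OO(-S_i)=0$ for $j>0$. Splicing via the sequences $0\to\OO(-S_i)\to\OO\to\OO_{S_i}\to 0$ and the Leray spectral sequence propagates the vanishing to the composite $\pi$.

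The main obstacle is ensuring that every pl-contraction appearing in this factorization has maximum fiber dimension one, as required by the hypothesis of Theorem~\ref{thm:pl-vanishing}: on a $3$-fold a pl-divisorial contraction could a priori collapse a divisor to a point, producing $2$-dimensional fibers. I expect this must be addressed either by a careful refinement of the DLT modification so that no such step occurs, or by localizing at the generic point of $W$ to handle the $R_1$ part of normality of $W$ together with a separate $S_2$ argument at codimension-two points of $W$. Reconciling this with the iterated vanishing needed in the codimension-two LC-center case, via the conormal sequence of $W' \subseteq S \subseteq Y$ and Proposition~\ref{dlt-adjunction}, is the most delicate technical point.
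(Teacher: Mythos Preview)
Your overall architecture is reasonable, but you have missed the key simplification that makes the paper's proof clean and that dissolves precisely the obstacle you identify. The paper does \emph{not} take a full DLT modification and then try to factor it into pl-steps. Instead, since $X$ is $\Q$-factorial KLT, $(X,(1-\e)\Delta)$ is KLT, so Reid's tie-breaking trick (\cite[8.7.1]{Cor07}) lets one replace $\Delta$ so that $W$ becomes the \emph{unique} LC center and there is a \emph{unique} divisor $S$ of discrepancy $-1$ over $X$. One then extracts only this single divisor, obtaining $f:(Y,S+\Delta')\to X$ with $\rho(Y/X)=1$ and $(Y,S+\Delta')$ PLT. Since $W\subset\supp(\Delta)$, one checks that $-(K_Y+S)$ is $f$-ample, so $f:(Y,S)\to X$ is already a single pl-divisorial contraction in the sense of Definition~\ref{d-pl}. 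When $\codim_X W=2$, $W$ is a curve and $S$ dominates $W$, so the fibers of $f$ are automatically at most one-dimensional; Theorem~\ref{thm:pl-vanishing} applies directly to give $R^1f_*\OO_Y(-S)=0$, and pushing forward $0\to\OO_Y(-S)\to\OO_Y\to\OO_S\to 0$ yields $\OO_W\cong f_*\OO_S$, whence $W$ is normal since $S$ is.

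By contrast, your plan of taking a full DLT modification and then running an MMP to factor $\pi$ into pl-steps forces you to confront exactly the two problems you flag: controlling the fiber dimension of each intermediate contraction, and propagating the vanishing for $\II_{W'/Y}$ (a codimension-two ideal sheaf) through a Leray spectral sequence and a conormal filtration. Neither of these is addressed, and the first is genuinely problematic: a pl-divisorial contraction on a $3$-fold can contract a divisor to a point, which lies outside the hypotheses of Theorem~\ref{thm:pl-vanishing}. The tie-breaking plus single-extraction argument sidesteps both issues entirely. For the codimension-one case, the paper also argues more directly: since $X$ is $\Q$-factorial, $(X,W)$ is PLT, adjunction gives $(W^n,\text{Diff}_{W^n})$ KLT with standard coefficients, hence strongly $F$-regular in characteristic $p>5$, and then \cite[4.1]{HX15} or \cite[4.1]{Das15} gives $W^n=W$; no DLT model is needed at all in that case.
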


\begin{proof}
Since $X$ is $\Q$-factorial and KLT, $(X, (1-\e)\Delta)$ is KLT for any $0<\e<1$, and all log canonical centers of $(X, \Delta)$ are contained in $\Delta$. Then by Reid's Tie Breaking trick (see \cite[8.7.1]{Cor07}) we may assume that $W$ is the unique log canonical center of $(X, \Delta)$ with a unique divisor over $X$ of discrepancy $-1$. There are two cases depending on the codimension of $W$.\\

\textbf{Case I:} \textit{ $\codim_X( W)=1$}.
Since $X$ is $\Q$-factorial, $(X, W)$ is log canonical. By adjunction $(K_X+W)|_{W^n}=K_{W^n}+\text{ Diff}_{W^n}$, where $W^n\to W$ is the normalization and $(W^n, \text{ Diff}_{W^n})$ is KLT. Thus by \cite{Har98} and \cite[3.1]{HX15},  $(W^n, \text{ Diff}_{W^n})$ is strongly $F$-regular in characteristic $p>5$. Then $W^n=W$, i.e., $W$ is normal by \cite[4.1]{HX15} or \cite[4.1]{Das15}.\\

\textbf{Case II:} \textit{ $\codim_X( W)=2$}.
Let $f:(Y, S+\Delta')\to (X, \Delta)$ be an extraction of the unique exceptional divisor $S$ over $X$ such that 
\begin{equation*}
K_Y+S+\Delta'=f^*(K_X+\Delta).
\end{equation*}
Note that $-S$ is $f$-ample. Since $(Y, S+\Delta')$ is PLT, $S$ is normal by Proposition \ref{dlt-adjunction}. Also, since $Y$ is $\Q$-factorial, $(Y, S)$ is PLT.\\

Consider the following exact sequence
\begin{equation*}
	\xymatrix{0\ar[r]& \OO_Y(-S)\ar[r] & \OO_Y\ar[r] & \OO_S\ar[r]& 0.}
\end{equation*}

Since $W$ is contained in the support of $\Delta$, $\Delta'\cap S\neq \emptyset$, and hence $-(K_Y+S)$ is $f$-ample. Thus $f: (Y, S)\to X$ is a pl-divisorial contraction. Then by Theorem \ref{thm:pl-vanishing}, $R^1f_*\OO_Y(-S)=0$, and we get the following exact sequence 
\begin{equation}\label{codim2mainses}
	\xymatrix{0\ar[r] & f_*\OO_Y(-S)\ar[r] & f_*\OO_Y\ar[r] & f_*\OO_S\ar[r] & 0. }
\end{equation}

Since $f_*\OO_Y(-S)=\II_W$ and $f_*\OO_Y=\OO_X$, we get 
\begin{equation}\label{codim2finalses}
	\xymatrix{0\ar[r] & \II_W\ar[r] & \OO_X\ar[r] & f_*\OO_S\ar[r] & 0. }
\end{equation}

Now $\OO_X\surjective f_*\OO_S$ factors in the following way

\begin{equation}
\xymatrix{ & \OO_W\ar@{^{(}->}[rd]\\
& & \nu_*\OO_{W^n}\ar@{^{(}->}[rd]\\
\OO_X\ar@{->>}[ruu]\ar@{->>}[rrr] & & & f_*\OO_S
}	
\end{equation}
where $\nu:W^n\to W$ is the normalization morphism.\\

Hence $\OO_W=\nu_*\OO_{W^n}$, i.e. $W$ is normal. 
\end{proof}

\section{Adjunction Formula}
In this section we will prove an adjunction formula for $3$-folds in characteristic $p>5$. To start with we will need the following definitions and results.\\

\begin{definition}[DCC sets]\label{dcc-def}
We say that a set $I$ of real numbers satisfies the \emph{descending chain condition} or DCC, if it does not contain any infinite strictly decreasing sequence. For example,
\[I=\left\{\frac{r-1}{r}: r\in \N \right\} \]
satisfies the DCC.\\

Let $I\subset [0, 1]$. We define 
\[I_+:=\{j\in [0, 1]: j=\sum_{p=1}^l i_p \mbox{ for some } i_1, i_2,\dots, i_l\in I\} \]
and
\[D(I):=\{a\<1: a=\frac{m-1+f}{m}, m\in \N, f\in I_+  \}. \]
\end{definition}

\begin{lemma}\cite[4.4]{MP04}\label{dcc-lemma}
	Let $I\subset [0, 1]$. Then
	\begin{enumerate}
		\item $D(D(I))=D(I)\cup \{1\}$.
		\item $I$ satisfies DCC if and only if $\bar{I}$ satisfies the DCC, where $\bar{I}$ is the closure of $I$.
		\item $I$ satisfies DCC if and only if $D(I)$ satisfies the DCC. 
		\end{enumerate}
	\end{lemma}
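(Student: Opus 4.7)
The plan is to establish the three parts in turn, using the elementary observations that $I \subseteq D(I)$ (take $m=1$ and $f=i$) and $1 - \frac{m-1+f}{m} = \frac{1-f}{m}$, which tethers every element of $D(I)$ to $1$. For part (1), the inclusion $D(I) \cup \{1\} \subseteq D(D(I))$ is routine from $I_+ \subseteq D(I)_+$, together with the observation that $\frac{1}{2} \in D(I)$ forces $1 = \frac{1}{2}+\frac{1}{2} \in D(I)_+$. The substantive direction is $D(D(I)) \subseteq D(I) \cup \{1\}$: writing $a = \frac{m-1+g}{m}$ with $g = d_1+\cdots+d_l \in D(I)_+$ and each $d_j = \frac{n_j-1+f_j}{n_j}$, $f_j \in I_+$, I would split the indices into $J_0 = \{j : n_j=1\}$ and $J_1 = \{j : n_j \geq 2\}$, so that $d_j \geq \frac{1}{2}$ for $j \in J_1$. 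The constraint $g \leq 1$ produces three exhaustive cases: if $|J_1|=0$ then $g \in I_+$ and $a \in D(I)$ directly; if $|J_1|=1$, clearing denominators rewrites $a$ as $\frac{mn_1-1+(n_1 g_0+f_1)}{mn_1} \in D(I)$ with $g_0 = \sum_{j \in J_0} f_j \in I_+$; and if $|J_1| \geq 2$, then $g \geq |J_1|/2 \geq 1$ together with $g \leq 1$ forces $g=1$ and $a=1$.

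For part (2), the direction $\Leftarrow$ is immediate from $I \subseteq \bar{I}$. For $\Rightarrow$ I would argue by contradiction: given a strictly decreasing $b_1 > b_2 > \cdots$ in $\bar{I}$ with gaps $\delta_n := b_n - b_{n+1}$, picking $i_n \in I$ with $|i_n - b_n| < \min(\delta_{n-1},\delta_n)/4$ (setting $\delta_0 := 1$) guarantees $i_n > i_{n+1}$ by the triangle inequality, producing a strictly decreasing sequence in $I$ and contradicting DCC.

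For part (3), $\Leftarrow$ again uses $I \subseteq D(I)$. For $\Rightarrow$ the key preparatory step is that $I_+$ itself satisfies DCC: DCC forces $\varepsilon := \min(I \setminus \{0\}) > 0$ (the case $I \subseteq \{0\}$ being trivial), so any sum bounded by $1$ has at most $1/\varepsilon$ positive summands; after passing to a subsequence with a fixed number $l$ of summands sorted decreasingly, successive extraction of monotone subsequences in each slot (possible by DCC of $I$) produces a coordinatewise nondecreasing subsequence whose totals are nondecreasing---contradicting strict decrease. Granted this, if $a_n = \frac{m_n-1+f_n}{m_n}$ were strictly decreasing in $D(I)$, then a bounded $(m_n)$ admits a constant-$m$ subsequence along which $(f_n)$ is strictly decreasing in $I_+$, contradicting DCC of $I_+$; and if $m_n \to \infty$ along some subsequence, then $a_n = 1 - (1-f_n)/m_n \to 1$, impossible for a strictly decreasing sequence bounded above by $1$. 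The principal obstacle will be the case analysis in part (1), hinging on the rigidity forced by $\sum d_j \leq 1$ with $d_j \geq (n_j-1)/n_j$---namely $\sum 1/n_j \geq l-1$---which tightly restricts how many $n_j$ can exceed $1$.
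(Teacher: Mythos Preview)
Your argument is correct. The paper itself gives no proof of this lemma: it is stated with a citation to \cite[4.4]{MP04} and used as a black box, so there is nothing to compare against beyond noting that your write-up supplies what the paper omits. The three parts are handled cleanly; the only points worth flagging are minor. In part~(1), your claim $\tfrac{1}{2}\in D(I)$ (via $m=2$, $f=0$) tacitly uses the convention that $0\in I_+$ (i.e.\ that the empty sum is allowed in the definition of $I_+$); this is the standard reading and is needed for the identity $D(D(I))=D(I)\cup\{1\}$ to hold at all, so you might state it explicitly. In the $|J_1|=1$ case you should also record that $f_1+n_1g_0\le 1$ follows from $a\le 1$ (you compute the right identity but do not say why the new ``$f$'' lands in $[0,1]$). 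In part~(3), the dichotomy ``$(m_n)$ bounded'' versus ``$m_n\to\infty$ along a subsequence'' is exactly the right split, and the contradiction in the unbounded case is that a strictly decreasing sequence in $[0,1]$ has limit strictly below its second term, hence cannot have a subsequence tending to $1$. All of this is standard and matches the argument in McKernan--Prokhorov.
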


\begin{lemma}\label{dcc-adjunction}\cite[Lemma 2.3]{CGS14}\cite[Lemma 4.3]{MP04}\cite[Lemma 4.1]{HMX14} Let $(X, \Delta\>0)$ be a log canonical pair such that the coefficients of $\Delta$ belong to a set $I\subset [0, 1]$. Let $S$ be a normal irreducible component of $\lrd \Delta\rrd$ and $\Theta\>0$ be the $\Q$-divisor on $S$ defined by adjunction:
	\[(K_X+\Delta)|_S=K_S+\Theta. \]
Then, the coefficients of $\Theta$ belong to $D(I)$.	
	\end{lemma}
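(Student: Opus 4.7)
The statement is local on $S$ and can be checked one component at a time, so I would fix a prime divisor $V \subset S$ appearing in $\Theta$ and show that $\text{coeff}_V(\Theta) \in D(I)$. The first move is to localize $X$ at the generic point of $V$, reducing the question to a two-dimensional excellent local situation in which $S$ is a regular curve germ inside a normal surface $X$, $(X,\Delta)$ is log canonical at $V$, and $S$ is the unique log canonical center through $V$. In this picture $(X,S)$ is PLT along $V$, so the local structure theory of two-dimensional PLT singularities is available.

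Write $\Delta - S = \sum_j b_j B_j$ with $b_j \in I$, and let $m$ denote the local Cartier index of $K_X + S$ at $V$. Passing to the cyclic index-$1$ cover $\pi\colon \tilde X \to X$ of $(X,S)$ of degree $m$, which is étale outside $V$ and totally ramified over it, one obtains a smooth surface germ $\tilde X$ along $\tilde S := \pi^{-1}(S)$ with $K_{\tilde X} + \tilde S = \pi^*(K_X+S)$. Classical adjunction on the smooth model $\tilde X$, applied to $K_{\tilde X} + \tilde S + \pi^*(\Delta - S)$ and followed by pushing down and taking Galois invariants under $\pi$, yields the standard index formula
\[
\text{coeff}_V(\Theta) \;=\; \frac{m - 1 + f}{m}, \qquad f = \sum_j n_j\, b_j,
\]
for non-negative integers $n_j$ that record the ramification of $\pi$ along each $B_j$. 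Since each $b_j \in I$ and each $n_j \in \mathbb{Z}_{\ge 0}$, the number $f$ is a finite sum of elements of $I$; the log canonical hypothesis forces $\text{coeff}_V(\Theta) \le 1$ and hence $f \le 1$, so $f \in I_+$. This gives $\text{coeff}_V(\Theta) \in D(I)$, as required.

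The delicate point in positive characteristic is the existence and tameness of the index-$1$ cover $\pi$, together with the smoothness (or at least canonical nature) of $\tilde X$ along $\tilde S$. This is exactly where the hypothesis $\chr p > 5$ combined with the $F$-regularity of KLT/PLT surface pairs established by Hara and by Hacon--Xu (and used repeatedly elsewhere in the paper) would enter the picture: once those surface-theoretic facts are in hand the index formula above is a routine computation, which is why the authors simply quote the statement from the detailed treatments in \cite{CGS14}, \cite{MP04}, and \cite{HMX14}.
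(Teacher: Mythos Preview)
The paper does not supply its own proof of this lemma; it is stated with citations to \cite{CGS14}, \cite{MP04}, and \cite{HMX14} and used as a black box. Your outline recovers the standard argument and the correct index formula $\mathrm{coeff}_V(\Theta)=(m-1+f)/m$ with $f\in I_+$, which is precisely what those references establish.

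One point deserves correction. The lemma is characteristic-free, and none of the cited sources impose any hypothesis such as $\chr k>5$. Your final paragraph suggests that the existence and tameness of the index-$1$ cover is the delicate step, rescued by the $F$-regularity input of Hara and Hacon--Xu; this is misleading. If $p\mid m$ the index-$1$ cover is inseparable and your argument as written breaks, and nothing in the assumption $p>5$ rules this out (the local index $m$ can be any positive integer). The references avoid the cover altogether: after localizing at the codimension-$2$ point one works on a log resolution of the surface germ (or uses the explicit classification of $2$-dimensional PLT singularities) and reads off the coefficient of the different via Mumford's intersection theory on the exceptional configuration. That computation is elementary and valid in every characteristic. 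So your strategy and final formula are correct, but the specific mechanism you invoke to justify it should be replaced by the resolution-based computation rather than an index-$1$ cover plus a characteristic bound that does not actually do the needed work.
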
	
	
\begin{definition}[Divisorial part and Moduli part]\cite[Section 7]{PS09}\cite[Section 6]{CTX13}\label{boundary-def}
	Let $f: X\to Z$ be a surjective proper morphism between two normal varieties and $K_X+D\sim_\Q f^*L$, where $D$ is a boundary divisor on $X$ and $L$ is a $\Q$-Cartier $\Q$-divisor on $Z$. Let $(X, D)$ be LC near the generic fiber of $f$, i.e., $(f^{-1}U, D|_{f^{-1}U})$ is LC for some Zariski dense open subset $U\subset Z$. Then we define two divisors $D_{\div}$ and $D_{\mod}$ on $Z$ in the following way:
\begin{align*}
D_{\div} &=\sum (1-c_Q)Q, \text{ where } Q\subset Z \text{ are prime Weil divisors of } Z,\\
c_Q &=\text{sup}\{ c\in \R : (X, D+cf^*Q) \text{ is LC over the generic point } \eta_Q \text{ of } Q \} \text{ and }\\
D_{\mod} &=L-K_Z-D_{\div}, \text{ so that } K_X+D\sim_\Q f^*(K_Z+D_{\div}+D_{\mod}).
\end{align*}
\end{definition}

\begin{remark}
Observe that $D_{\div}$ is a fixed divisor on $Z$, called the \emph{Divisorial part} and $D_{\mod}$ is a $\Q$-linear equivalence class on $Z$, called the \emph{Moduli part}. For other properties of $D_{\div}$ and $D_{\mod}$ see \cite[Section 7]{PS09} and \cite[Section 3]{Amb99}.
	\end{remark}

Let $\overline{\mathcal{M}}_{0, n}$ be the moduli space of $n$-pointed stable curves of genus $0$, $f_{0, n}: \overline{\mathcal{U}}_{0, n}\to \overline{\mathcal{M}}_{0, n}$ the universal family, and $\mathcal{P}_1, \mathcal{P}_2,\cdots , \mathcal{P}_n$, the sections of $f_{0, n}$ which correspond to the marked points (see \cite{Kee92} and \cite{Knu83}). Let $d_j \ (j=1,2,\cdots, n)$ be rational numbers such that $0<d_j\<1$ for all $j$, $\sum_j d_j=2$ and $\mathcal{D}=\sum_{j}d_j\mathcal{P}_j$.

\begin{lemma}\label{L-semi-ample}
\begin{enumerate}
\item There exists a smooth projective variety $\mathcal{U}^*_{0, n}$, a $\P^1$-bundle $g_{0, n}: \mathcal{U}^*_{0, n}\to \overline{\mathcal{M}}_{0, n}$, and a sequence of blowups with smooth centers 
\begin{equation*}
\xymatrix{	\overline{\mathcal{U}}_{0, n}=\mathcal{U}^{(1)}\ar[r]^-{\sigma_2} & \mathcal{U}^{(2)}\ar[r]^-{\sigma_3}& \cdots \ar[r]^-{\sigma_{n-2}}& \mathcal{U}^{(n-2)}=\mathcal{U}^*_{0, n}. }	
\end{equation*}

\item Let $\sigma: \overline{\mathcal{U}}_{0, n}\to \mathcal{U}^*_{0, n}$ be the induced morphism, and $\mathcal{D}^*=\sigma_*\mathcal{D}$. Then $K_{\overline{\mathcal{U}}_{0, n}}+\mathcal{D}-\sigma^*(K_{\mathcal{U}^*_{0, n}}+\mathcal{D}^*)$ is effective.

\item There exists a semi-ample $\Q$-divisor $\mathcal{L}$ on $\overline{\mathcal{M}}_{0, n}$ such that 
\[K_{\mathcal{U}^*_{0, n}}+\mathcal{D}^*\sim_{\Q} g^*_{0, n}(K_{\overline{\mathcal{M}}_{0, n}}+\mathcal{L}).
 \]
\end{enumerate}
\end{lemma}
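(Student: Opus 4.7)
The plan is to exploit that the general fiber of $\overline{\mathcal{U}}_{0,n}\to \overline{\mathcal{M}}_{0,n}$ is a log Calabi--Yau pair, since $\deg(K+\mathcal{D})|_{\text{fiber}}=-2+\sum d_j=0$, and to produce $\mathcal{U}^*_{0,n}$ as a weighted-stable reduction of the universal family in the sense of Hassett. For (1), I would build $\mathcal{U}^*_{0,n}$ by iteratively contracting the ``light'' components of the fibers of $\overline{\mathcal{U}}_{0,n}\to \overline{\mathcal{M}}_{0,n}$: a geometric fiber is a tree of $\mathbb{P}^1$'s, and any component carrying markings of total weight $\sum_{i\in A}d_i\leq 1$ is declared light and slated for contraction. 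Processing boundary strata of $\overline{\mathcal{M}}_{0,n}$ in order of decreasing codimension, each step is the contraction of a smooth (possibly disconnected) divisor $D_j\subset \mathcal{U}^{(j-1)}$---a disjoint union of $\mathbb{P}^1$-bundles over boundary strata of the corresponding depth---onto a smooth codimension-$2$ center $Z_j\subset \mathcal{U}^{(j)}$ consisting of sections of the adjacent components, so that each $\sigma_j:\mathcal{U}^{(j-1)}\to \mathcal{U}^{(j)}$ is the blowup of $\mathcal{U}^{(j)}$ along $Z_j$. Once every boundary stratum is processed, every fiber is a single $\mathbb{P}^1$, and flatness gives the advertised $\mathbb{P}^1$-bundle structure $g_{0,n}:\mathcal{U}^*_{0,n}\to \overline{\mathcal{M}}_{0,n}$.

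For (2), I analyse each $\sigma_j$ individually. The smooth codimension-$2$ center $Z_j$ is exactly where the sections $\mathcal{P}^{(j)}_{i_1},\ldots,\mathcal{P}^{(j)}_{i_s}$ indexed by the markings on the just-contracted light component all coincide---smoothly, and, by the transverse geometry of the boundary strata of the moduli space, with multiplicity one each. Standard formulas give $K_{\mathcal{U}^{(j-1)}}=\sigma_j^*K_{\mathcal{U}^{(j)}}+E_j$ and $\sigma_j^*\mathcal{P}^{(j)}_{i_\ell}=\tilde{\mathcal{P}}_{i_\ell}+E_j$ for each $\ell$, so
\[
K_{\mathcal{U}^{(j-1)}}+\mathcal{D}^{(j-1)}-\sigma_j^*\bigl(K_{\mathcal{U}^{(j)}}+\mathcal{D}^{(j)}\bigr)=\Bigl(1-\sum_{\ell=1}^{s}d_{i_\ell}\Bigr)E_j,
\]
and the coefficient is non-negative by the light-component criterion of (1). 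Telescoping over $j=2,\ldots,n-2$ yields the effectivity of $K_{\overline{\mathcal{U}}_{0,n}}+\mathcal{D}-\sigma^*(K_{\mathcal{U}^*_{0,n}}+\mathcal{D}^*)$.

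For (3), since $g_{0,n}$ is a $\mathbb{P}^1$-bundle and $K_{\mathcal{U}^*_{0,n}/\overline{\mathcal{M}}_{0,n}}+\mathcal{D}^*$ has fiberwise degree zero, cohomology and base change produces a $\Q$-divisor $\mathcal{L}$ on $\overline{\mathcal{M}}_{0,n}$ with $K_{\mathcal{U}^*_{0,n}}+\mathcal{D}^*\sim_\Q g_{0,n}^*(K_{\overline{\mathcal{M}}_{0,n}}+\mathcal{L})$, computable as $\mathcal{L}=g_{0,n,*}(K_{\mathcal{U}^*_{0,n}/\overline{\mathcal{M}}_{0,n}}+\mathcal{D}^*)-K_{\overline{\mathcal{M}}_{0,n}}$. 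Semi-ampleness of $\mathcal{L}$ follows by identifying it, up to a positive rational multiple, with the pullback $\varphi^*\mathcal{H}$ of the natural ample polarization $\mathcal{H}$ on the GIT quotient $(\mathbb{P}^1)^n/\!/_{\mathbf{d}}\mathrm{SL}_2$, where $\varphi:\overline{\mathcal{M}}_{0,n}\to (\mathbb{P}^1)^n/\!/_{\mathbf{d}}\mathrm{SL}_2$ is the classifying morphism of the weighted $\mathbb{P}^1$-family $(\mathcal{U}^*_{0,n},\mathcal{D}^*)$; this is a characteristic-independent GIT construction. The main obstacle is the rigorous execution of (1): the normal-bundle computation ensuring that each light-component contraction really is a smooth blowup, together with the combinatorial check that the procedure glues coherently across overlapping boundary strata of $\overline{\mathcal{M}}_{0,n}$, which is essentially the content of Knudsen's iterated construction of $\overline{\mathcal{M}}_{0,n+1}$ from $\overline{\mathcal{M}}_{0,n}$.
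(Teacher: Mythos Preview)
Your proposal is correct and follows essentially the same route as the paper. The paper's own proof is a one-line deferral to \cite[Theorem~2]{Kaw97} (with pointers to \cite{CTX13}, \cite{PS09}, \cite{KMM94}), noting only that Kawamata's argument is characteristic-free; your sketch supplies precisely those details---the Knudsen-type factorization of $\overline{\mathcal{U}}_{0,n}$ over a $\mathbb{P}^1$-bundle, the blowup discrepancy calculation for (2), and the identification of $\mathcal{L}$ with the GIT polarization on $(\mathbb{P}^1)^n/\!/_{\mathbf d}\operatorname{SL}_2$ for (3)---and your Hassett weighted-stability framing is a harmless repackaging of Kawamata's original construction.
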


\begin{proof}
The proof in \cite[Theorem 2]{Kaw97} works in positive characteristic without any change (see also \cite[6.7]{CTX13}, \cite[8.5]{PS09} and \cite[Section 3]{KMM94}).
\end{proof}

\begin{lemma}[Stable Reduction Lemma]\label{semistable-reduction}
Let $B$ be a smooth curve and $f:X\to B$, a flat family of rational curves such that the general fiber is isomorphic to $\P^1$, and a unique singular fiber $X_0$ over $0\in B$. Also assume that $ f|_{X^*}: (X^*=X\backslash X_0; \mathcal{P}_1, \mathcal{P}_2,\dots, \mathcal{P}_n) \to B^*=B-\{0\}$ is a flat family of $n$-pointed stable rational curves sitting in the following commutative diagram
\begin{equation}\label{canonical-diagram}
	\xymatrixcolsep{5pc}\xymatrix{ X^*=B^*\times_{\overline{\mathcal{M}}_{0, n}} \overline{\mathcal{U}}_{0, n} \ar[d]_{f|_{X^*}}\ar[r] & \overline{\mathcal{U}}_{0, n}\ar[d]\\
	B^*\ar[r] & \overline{\mathcal{M}}_{0, n}
	}
\end{equation}
Then there exists a unique flat family $\hat{f}: \hat{X}\to B$ of $n$-pointed stable rational curves satisfying the following commutative diagram
\begin{equation}\label{semistable-diagram}
	\xymatrixcolsep{5pc}\xymatrix{ X\ar[d]_f & \hat{X}=B\times_{\overline{\mathcal{M}}_{0, n}} \overline{\mathcal{U}}_{0, n}\ar@{-->}[l]\ar[d]_{\hat{f}}\ar[r] & \overline{\mathcal{U}}_{0, n}\ar[d]\\
	B & B\ar[l]_{\mbox{id}_B}\ar[r] & \overline{\mathcal{M}}_{0, n}
	}
\end{equation}	
where the broken horizontal map is a birational map such that $f^{-1}B^*\cong \hat{f}^{-1}B^*$.
\end{lemma}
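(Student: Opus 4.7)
The plan is to view the lemma as an exercise in the universal property of the projective moduli space $\overline{\mathcal{M}}_{0,n}$ together with the valuative criterion of properness. Since the result is moduli-theoretic and does not involve the geometry of $X$ itself beyond its generic fiber structure, no positive-characteristic pathology should appear: the constructions of Knudsen and Keel producing $\overline{\mathcal{M}}_{0,n}$ and $\overline{\mathcal{U}}_{0,n}$ as smooth projective varieties (in fact, as schemes over $\mathrm{Spec}\,\mathbb{Z}$) are characteristic-free, and for $n\geq 3$ stable $n$-pointed rational curves have no automorphisms, so $\overline{\mathcal{M}}_{0,n}$ is a fine moduli space.

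First I would translate the data into a classifying morphism: the flat family $(X^{*};\mathcal{P}_1,\dots,\mathcal{P}_n)\to B^{*}$ of $n$-pointed stable rational curves is, by the fine moduli property of $\overline{\mathcal{M}}_{0,n}$, equivalent to a morphism $\psi^{*}:B^{*}\to \overline{\mathcal{M}}_{0,n}$, and the Cartesian square \eqref{canonical-diagram} is precisely the statement that $X^{*}$ is the pullback of $\overline{\mathcal{U}}_{0,n}$ along $\psi^{*}$. Because $\overline{\mathcal{M}}_{0,n}$ is projective and hence proper, and because $B$ is a smooth curve obtained from $B^{*}$ by adjoining the single closed point $0$ whose local ring is a DVR, the valuative criterion of properness gives a unique extension $\psi:B\to \overline{\mathcal{M}}_{0,n}$ of $\psi^{*}$.

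Next I would define $\hat{X}:=B\times_{\overline{\mathcal{M}}_{0,n}}\overline{\mathcal{U}}_{0,n}$ with first projection $\hat{f}:\hat{X}\to B$. Flatness of the universal family $\overline{\mathcal{U}}_{0,n}\to \overline{\mathcal{M}}_{0,n}$ is preserved by base change, and the sections corresponding to the marked points pull back to sections of $\hat{f}$, so $\hat{f}$ is a flat family of $n$-pointed stable rational curves, fitting in the right-hand square of \eqref{semistable-diagram} by construction. Over $B^{*}$ we have $\hat{f}^{-1}B^{*}=B^{*}\times_{\overline{\mathcal{M}}_{0,n}}\overline{\mathcal{U}}_{0,n}=X^{*}=f^{-1}B^{*}$ canonically (because $\psi|_{B^{*}}=\psi^{*}$), producing the birational map $\hat{X}\dashrightarrow X$ and identifying the restriction $\hat{f}^{-1}B^{*}\to B^{*}$ with $f|_{X^{*}}$.

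Finally, uniqueness is essentially automatic: any flat family $\hat{f}':\hat{X}'\to B$ of stable $n$-pointed rational curves restricting to $X^{*}\to B^{*}$ over $B^{*}$ classifies a morphism $\psi':B\to \overline{\mathcal{M}}_{0,n}$ extending $\psi^{*}$, and by separatedness of $\overline{\mathcal{M}}_{0,n}$ we must have $\psi'=\psi$, hence $\hat{X}'\cong \hat{X}$ as families. I do not foresee any real obstacle in this argument; the only point that requires a cross-reference is that $\overline{\mathcal{M}}_{0,n}$ is a fine, projective moduli space in arbitrary characteristic, which is precisely the content of \cite{Knu83} and \cite{Kee92}. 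In particular, and unlike the general stable reduction theorem for higher-genus or higher-dimensional families, no base change of $B$ is required here, because the absence of automorphisms for the stable objects means $\overline{\mathcal{M}}_{0,n}$ itself (not a Deligne–Mumford stack) carries a universal family.
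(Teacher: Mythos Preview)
Your proposal is correct and follows essentially the same approach as the paper: extend the classifying morphism $B^*\to\overline{\mathcal{M}}_{0,n}$ to $B\to\overline{\mathcal{M}}_{0,n}$ via the valuative criterion of properness, then pull back the universal family. Your write-up simply supplies more detail (flatness by base change, the explicit birational identification over $B^*$, and uniqueness via separatedness) than the paper's two-sentence proof.
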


\begin{proof}
Since $\overline{\mathcal{M}}_{0, n}$ is a proper scheme, by the valuative criterion of properness any morphism $B^*\to \overline{\mathcal{M}}_{0, n}$ extends uniquely to a morphism $B\to \overline{\mathcal{M}}_{0, n}$. Now since $\overline{\mathcal{M}}_{0, n}$ has a universal family $\overline{\mathcal{U}}_{0, n}$, the existence of $\hat{f}: \hat{X}\to B$ follows by taking the fiber product.	
\end{proof}

\begin{theorem}[Canonical Bundle Formula]\label{moduli-part}
Let $f: X\to Z$ be a proper surjective morphism, where $X$ is a normal surface and $Z$ is a smooth curve over an algebraically closed field $k$ of char $k>0$. Assume that $Q=\sum_i Q_i$ is a divisor on $Z$ such that $f$ is smooth over $(Z-\mbox{Supp}(Q))$ with fibers isomorphic to $\P^1$. Let $D=\sum_j d_jP_j$ be a $\Q$-divisor on $X$, where $d_j=0$ is allowed, which satisfies the following conditions:
\begin{enumerate}
\item $(X, D\>0)$ is KLT.
\item $D=D^h+D^v$, where $D^h=\sum_{f(D_j)=Z}d_jD_j$ and $D^v=\sum_{f(D_j)\neq Z}d_jD_j$. An irreducible component of $D^h$ (resp. $D^v$) is called horizontal (resp. vertical) component.
\item char $k=p>\frac{2}{\delta}$, where $\delta$ is the minimum non-zero coefficient of $D^h$.	
\item $K_X+D\sim_{\Q} f^*(K_Z+M)$ for some $\Q$-Cartier divisor $M$ on $Z$.
\end{enumerate}
Then there exist an effective $\Q$-divisor $D_{\div}\>0$ and a semi-ample $\Q$-divisor $D_{\mod}\>0$ on $Z$ (as defined in \ref{boundary-def}) such that
\begin{equation*}
	K_X+D\sim_\Q f^*(K_Z+D_{\div}+D_{\mod}).
	\end{equation*}
\end{theorem}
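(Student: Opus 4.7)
First I would verify that $D_{\div}$ from Definition \ref{boundary-def} is effective: since $(X,D)$ is KLT we have $c_Q > 0$ for every prime $Q \subset Z$, and since $f^*Q$ has multiplicity at least $1$ along some vertical component over $Q$, adding $c\,f^*Q$ to $D$ forces a vertical coefficient to exceed $1$ as soon as $c>1$, so $c_Q \leq 1$ and thus $D_{\div} \geq 0$. Restricting to the generic fiber $X_\eta \cong \P^1_{k(Z)}$: by condition (4), $(K_X+D)|_{X_\eta} \sim_\Q 0$; combined with the KLT assumption (1) this forces $D|_{X_\eta}=\sum_j d_j P_j|_{X_\eta}$ to be a degree-$2$ $\Q$-divisor supported at $n$ distinct $k(Z)$-points with $d_j \in (0,1)$. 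Using $d_j \geq \delta$, we get $n \leq 2/\delta < p$.

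The heart of the proof is to exhibit $D_{\mod}$ as a descent of the pullback of the semi-ample class $\mathcal{L}$ of Lemma \ref{L-semi-ample}(3). Since the marked points of the generic fiber may become sections only after a base change, I would first take a tame finite cover $\tau : \tilde Z \to Z$ (which exists because $n<p$) so that on the open locus $\tilde B^* \subset \tilde Z$ where $\tilde X := X \times_Z \tilde Z$ is a smooth $\P^1$-fibration, the pullback of the horizontal divisor splits into $n$ disjoint sections. This produces a morphism $\tilde B^* \to \overline{\mathcal{M}}_{0,n}$; by the properness of $\overline{\mathcal{M}}_{0,n}$ it extends to $\Phi : \tilde Z \to \overline{\mathcal{M}}_{0,n}$, and Lemma \ref{semistable-reduction} then gives a stable model $\hat f : \hat X \to \tilde Z$ that is birational to $\tilde X$ over $\tilde Z$ and agrees with it over $\tilde B^*$.

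I would then compare $(\tilde X, \tilde D)$ and $(\hat X, \hat D)$ on a common resolution, using Lemma \ref{L-semi-ample}(2) to bound discrepancies: all divisors along which the two models disagree lie in singular fibers over $\tilde Z$, hence are vertical and get absorbed into the divisorial part on $\tilde Z$. This yields
\[K_{\tilde X} + \tilde D \sim_\Q \tilde f^*\bigl(K_{\tilde Z} + \tilde D_{\div} + \Phi^*\mathcal{L}\bigr)\]
with $\tilde D_{\div}$ effective. By uniqueness of the moduli part up to $\Q$-linear equivalence, $\tau^* D_{\mod} \sim_\Q \Phi^*\mathcal{L}$; since $\mathcal{L}$ is semi-ample (Lemma \ref{L-semi-ample}(3)) and $\tau$ is finite, $D_{\mod}$ is semi-ample on $Z$. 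On the smooth curve $Z$, any semi-ample $\Q$-divisor has an effective representative in its $\Q$-linear class, so we may take $D_{\mod} \geq 0$. The main obstacle is the discrepancy comparison between the two families on a common birational model; the hypothesis $p > 2/\delta > n$ enters precisely here, as it guarantees that $\tau$ is tame, so that stable reduction introduces only vertical divisors and the descent of $\Phi^*\mathcal{L}$ from $\tilde Z$ back to $Z$ preserves semi-ampleness without contaminating the moduli part.
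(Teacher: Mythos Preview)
Your proposal is correct and follows essentially the same strategy as the paper: base change so that the horizontal components become sections, use the induced map to $\overline{\mathcal{M}}_{0,n}$ together with Lemmas \ref{L-semi-ample} and \ref{semistable-reduction} to identify the moduli part with the pullback of $\mathcal{L}$, and then descend semi-ampleness along the finite base change of smooth curves. The only organizational differences are that the paper performs the base change iteratively (one horizontal component at a time, each step of degree $\leq 2/\delta<p$) rather than in one shot via $p\nmid n!$, and it carries out the discrepancy comparison on the $\P^1$-bundle model $\tilde Z\times_{\overline{\mathcal{M}}_{0,n}}\mathcal{U}^*_{0,n}$, where the identity $\hat D^v=\hat f^*\hat D_{\div}$ becomes transparent by inversion of adjunction; your sketch of that step via Lemma \ref{L-semi-ample}(2) is on the right track but would benefit from passing to this $\P^1$-bundle to make the absorption of vertical discrepancies into $D_{\div}$ precise.
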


\begin{proof}
The sketch of the proof of this formula is given in \cite[6.7]{CTX13}. We include a complete proof following the idea of the proof of \cite[Theorem 8.1]{PS09}.\\

First we reduce the problem to the case where all components of $D^h$ are sections. Let $D_{i_0}$ be a horizontal component of $D$ and $Z'\to D_{i_0}$ be the normalization of $D_{i_0}$. Then $\nu: Z'\to Z$ is a finite surjective morphism of smooth curves. Let $X'$ be the normalization of the component of $X\times_Z Z'$ dominating $Z'$.
\begin{equation}\label{base-change-diagram}
\xymatrixcolsep{3pc}\xymatrix{ X\ar[d]_f & X'\ar[l]_{\nu'}\ar[d]_{f'}\\
Z & Z'\ar[l]_{\nu}
}
\end{equation}

Let $k=\deg(\nu:Z'\to Z)$ and $l$ be a general fiber of $f$. Then
\begin{equation}
	k=D_i\cdot l\< \frac{1}{d_i} (D\cdot l)=\frac{1}{d_i} (-K_X\cdot l)=\frac{2}{d_i}\<\frac{2}{\delta}<\chr k. 
	\end{equation}
Therefore $\nu: Z'\to Z$ is a separable morphism.\\

Let $D'$ be the log pullback of $D$ under $\nu'$, i.e.,
\begin{equation}
	K_{X'}+D'=\nu'^*(K_X+D).
	\end{equation} 
More precisely we have (by \cite[20.2]{Kol92})
\[D'=\sum_{i, j}d'_{ij}D'_{ij},\qquad  \nu'(D'_{ij})=D_i,\qquad d'_{ij}=1-(1-d_i)e_{ij},\]
where $e_{ij}$'s are the ramification indices along the $D'_{ij}$'s.\\

By construction $X$ dominates $Z$. Also, since $\nu$ is etale over a dense open subset of $Z$, say, $\nu^{-1}U\to U$, and etale morphisms are stable under base change, $(f'\circ\nu)^{-1} U\to f^{-1}U$ is etale. Thus the ramification locus $\Lambda$ of $\nu'$ does not contain any horizontal divisor of $f'$, i.e., $f'(\Lambda)\neq Z'$. Therefore $D'$ is a boundary near the generic fiber of $f'$, i.e., $D'^h$ is effective. We observe that the coefficients of $D'^h$ can be computed by intersecting with a general fiber of $f': X'\to Z'$, hence they are equal to the coefficients of $D^h\subset X$. Thus the condition $p>\frac{2}{\delta}$ remains true for $D'$ on $X'$.\\

After finitely many such base changes let $g: \tilde X\to \tilde{Z}$ be a family such that all of the horizontal components of $D_{\tilde X}$ are sections of $g$, where $D_{\tilde X}$ is the log pullback of $D$, i.e., $K_{\tilde X}+D_{\tilde X}=\psi^*(K_X+D)$.
\begin{equation}
	\xymatrixcolsep{3pc}\xymatrix{ X\ar[d]_f & \tilde{X}\ar[l]_{\psi}\ar[d]_g\\
	Z & \tilde{Z}\ar[l]_{\psi_0}
	}
	\end{equation}

By Lemma \ref{semistable-reduction}, we get a family of $n$-pointed stable rational curves $\bar{X}=\tilde{Z}\times_{\overline{\mathcal{M}}_{0, n}} \overline{\mathcal{U}}_{0, n}\to \tilde{Z}$. Let ${X'}$ be the common resolution of $\tilde X$ and $\hat{X}$. Let $\hat{X}=\tilde{Z}\times_{\overline{\mathcal{M}}_{0, n}} \mathcal{U}^*_{0, n}$. By the universal property of fiber products there exists a morphism $\mu: {X'}\to \hat{X}$. Since ${X'}$, $\tilde{X}$ and $\hat{X}$ are all isomorphic $\P^1$-bundles over a dense open subset $U\subset \tilde{Z}$, $\mu: {X'}\to \hat{X}$ is birational.
\begin{equation}\label{tilde-diagram}
\xymatrixcolsep{3pc}\xymatrix{ & &{X'}\ar@/_1pc/[dll]_\pi \ar[dl]_\lambda \ar[dr]^\mu \ar[dd]_<<<<<<<<{\tilde{f}}& &\\
X\ar[d]_f  & \tilde X\ar[l]_\psi \ar[dr]_g \ar@{-->}[rr] & & \hat{X}\ar@{-->}[r] \ar@/^2pc/[rr]^-{\hat{\phi}}\ar[dl]^{\hat{f}} & \overline{\mathcal{U}}_{0, n}\ar[r]^{\sigma}\ar[d]_{f_{0, n}} & \mathcal{U}^*_{0, n}\ar[dl]^{g_{0, n}}\\
Z & &\tilde{Z}\ar[ll]_{\psi_0}\ar[rr]^-{\phi_0} & &\overline{\mathcal{M}}_{0, n} & 
}
\end{equation}
 
Let ${D'}$ and $\hat{D}$ be $\Q$-divisors on $X'$ and $\hat{X}$ respectively, defined by
\begin{equation}\label{d-tilde}
K_{X'}+{D'}=\pi^*(K_X+D).	
\end{equation}
and 
\begin{equation*}
	K_{\hat{X}}+\hat{D}=\mu_*(K_{X'}+D').
	\end{equation*}
Since $K_{{X'}}+{D'}$ is a pullback from the base $\tilde{Z}$ (by \eqref{tilde-diagram}), by the Negativity lemma we get
\begin{equation}\label{d-hat}
	K_{{X'}}+{D'}=\mu^*(K_{\hat{X}}+\hat{D}).
	\end{equation}
Since the definition of the \emph{divisorial part} of the adjunction does not depend on the birational modification of the family (see \cite[Remark 7.3(ii)]{PS09} or \cite[Remark 3.1]{Amb99}), we will define it with respect to $\hat{f}: \hat{X}\to \tilde{Z}$. First we will show that the $\Q$-divisor $\hat{D}_{\mod}$ on $\tilde{Z}$ is semi-ample.\\ 

Since $\hat{\phi}$ is finite and $\mathcal{D}^*$ is horizontal it follows that $\hat{\phi}^*\mathcal{D}^*$ is horizontal too. Since $\hat{D}^h$ is also horizontal one sees that
\begin{equation}\label{eqn:phi-hat}
\hat{D}^h=\hat{\phi}^*\mathcal{D}^*. 
\end{equation}
From the construction of $\sigma: \overline{\mathcal{U}}_{0, n}\to \mathcal{U}^*_{0, n}$ we see that $(F, \mathcal{D}^*|_F)$ is log canonical for any fiber $F$ of $g_{0, n}: \mathcal{U}^*_{0, n}\to \overline{\mathcal{M}}_{0, n}$. Since the fibers of $\hat{f}: \hat{X}\to \tilde{Z}$ are isomorphic to the fibers of $g_{0, n}$, $(\hat{F}, \hat{D}^h|_{\hat{F}})$ is also log canonical, where $\hat{F}$ is a fiber of $\hat{f}$. Finally, since $\hat{X}$ is a surface, by inversion of adjunction $(\hat{X}, \hat{F}+\hat{D}^h)$ is log canonical near $\hat{F}$. Thus, since the fibers  of $\hat{f}$ are reduced, the $\lct$ of $(\hat{X}, \hat{D}; \hat{F})$ over the generic point of $\hat{F}$ is $(1-\mbox{coeffi.}_{\hat{F}}\hat{D})$. Hence we get
\begin{equation}\label{vertical-equation}
	\hat{D}^v=\hat{f}^*\hat{D}_{\div}.
	\end{equation}
By definition of $\hat D _{\rm mod}$ we have 
\begin{equation}\label{horizontal-equality}
	K_{\hat{X}}+\hat{D}^h\sim_\Q\hat{f}^*(K_{\tilde{Z}}+\hat{D}_{\mod}).
	\end{equation}
 
Then we have
\begin{equation}\label{horizontal-comparison}
	K_{\hat{X}}+\hat{D}^h-\hat{f}^*(K_{\tilde{Z}}+\phi_0^*\mathcal{L})=K_{\hat{X}/\tilde{Z}}+\hat{D}^h-\hat{\phi}^*K_{\mathcal{U}^*_{0, n}/\overline{\mathcal{M}}_{0, n}}-\hat{\phi}^*\mathcal{D}^*\sim_\Q 0,
	\end{equation}
where the first equality follows from  \eqref{horizontal-equality} and Lemma \ref{L-semi-ample}, and the second relation from \eqref{eqn:phi-hat} and \cite[Chapter 6, Theorem 4.9 (b) and Example 3.18]{Liu02}.\\

Since $\hat{f}$ has connected fibers, by \eqref{horizontal-equality} and \eqref{horizontal-comparison} and the projection formula for locally free sheaves, we get
\[\hat{D}_{\mod}\sim_\Q\phi^*_0\mathcal{L} \]
i.e., $\hat{D}_{\mod}$ is semi-ample.\\

Now, since $\psi _0: \tilde{Z}\to Z$ is a composition of finite morphisms of degree strictly less than $\chr k$, by \cite[Corollary 2.43]{Kol13} and \cite[Theorem 3.2]{Amb99} (also see \cite[6.6]{CTX13}) we get
\begin{equation}\label{divisorial-relation}
K_{\tilde{Z}}+\hat{D}_{\div}\sim_\Q \psi^*_0(K_Z+D_{\div}).
\end{equation}
Therefore
\begin{equation}
	\psi^*_0D_{\mod}\sim_\Q \hat{D}_{\mod}
	\end{equation}
Since $Z$ and $\tilde{Z}$ are both smooth curves, $D_{\mod}$ is semi-ample.

\end{proof}


\begin{theorem}\label{adjunction}
Let $(X, D\>0)$ be a $\Q$-factorial $3$-fold log canonical pair such that the coefficients of $D$ are contained in a DCC set $I\subset [0, 1]$. Let $W$ be a minimal log canonical center of $(X, D)$, and assume that the codimension of $W$ is $2$. Also assume that $X$ has KLT singularities and char $k>{\rm max}\{5,\frac{2}{\delta}\}$, where $\delta$ is the non-zero minimum of the set $D(I)$ (defined in \ref{dcc-def}). Then the following hold:
\begin{enumerate}
\item $W$ is normal.
\item There exists effective $\Q$-divisors $D_W$ and $M_W$ on $W$ such that $(K_X+D)|_W\sim_{\Q} K_W+D_W+M_W$. Moreover, if $D=D'+D''$ with $D'$ (resp. $D''$) the sum of all irreducible components which contain (resp. do not contain) $W$, then $M_W$ is determined only by the pair $(X, D')$.
\item There exists an effective $\Q$-divisor $M'_W$ such that $M'_W\sim_{\Q} M_W$ and the pair $(W, D_W+M'_W)$ is KLT.
\end{enumerate} 
\end{theorem}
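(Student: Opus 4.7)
The plan is to deduce (1) from Theorem \ref{normality} and then prove (2) and (3) by extracting the unique divisor of discrepancy $-1$ over $W$ and applying the canonical bundle formula Theorem \ref{moduli-part} to the resulting $\P^1$-fibration over $W$. For (1), since $(X, D)$ is a $\Q$-factorial $3$-fold log canonical pair with $X$ KLT, Theorem \ref{normality} immediately gives that $W$ is normal, hence (being $1$-dimensional) a smooth curve.

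For (2), I first apply Reid's tie-breaking trick (as in the proof of Theorem \ref{normality}) to reduce to the case that $W$ is the unique LC center of $(X, D)$ and that a unique divisor $S$ over $X$ has discrepancy $-1$ with center $W$. By \cite[7.7]{Bir13}, extract $S$ via a $\Q$-factorial model $f : Y \to X$ of relative Picard number $1$, yielding
\[ K_Y + S + \Delta' = f^*(K_X + D) \]
with $(Y, S + \Delta')$ PLT, $S$ prime, and $-S$ $f$-ample. Since $W$ has codimension $2$ and only one divisor is extracted, every fiber of $f$ has dimension at most $1$, so $g := f|_S : S \to W$ is a projective morphism from a normal surface (Proposition \ref{dlt-adjunction}) to the smooth curve $W$; a general fiber $C$ satisfies $K_S \cdot C < 0$ (from $(K_S + \Theta) \cdot C = 0$ together with $\Theta \cdot C > 0$, as the horizontal components of $\Theta$ meet $C$), so $C \cong \P^1$.

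Adjunction on $S$ gives $(K_Y + S + \Delta')|_S = K_S + \Theta$ with $(S, \Theta)$ KLT (since $(Y, S+\Delta')$ is PLT); by Lemma \ref{dcc-adjunction} the coefficients of $\Theta$ lie in $D(I)$, so the minimum non-zero coefficient of $\Theta^h$ is at least $\delta$. All hypotheses of Theorem \ref{moduli-part} for $g : S \to W$ are then satisfied (using $\chr k > 2/\delta$), producing
\[ K_S + \Theta \sim_\Q g^*(K_W + D_W + M_W) \]
with $D_W \geq 0$ and $M_W$ semi-ample. Because $K_S + \Theta = g^*((K_X + D)|_W)$ and $g_*\OO_S = \OO_W$ (general fibers connected, $W$ normal), this descends to $(K_X + D)|_W \sim_\Q K_W + D_W + M_W$. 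For the dependence claim, writing $D = D' + D''$ as in the statement, the strict transforms of components of $D''$ meet $S$ only in fibers of $g$ (since $D''\cap W$ is finite), hence contribute only to $\Theta^v$; thus $\Theta^h$ is unchanged on replacing $D$ by $D'$, and since $M_W$ is determined by $\Theta^h$ via the moduli map $W \to \overline{\mathcal M}_{0,n}$ in the proof of Theorem \ref{moduli-part}, $M_W$ depends only on $(X, D')$.

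For (3), the KLT condition on $(S, \Theta)$ forces every coefficient of $\Theta$ to be strictly less than $1$, so for each closed point $Q \in W$ we have $c_Q > 0$ and hence the coefficient $1 - c_Q$ of $D_W$ at $Q$ is $<1$. Since $M_W$ is semi-ample on the smooth curve $W$, for $n \gg 0$ the linear system $|n M_W|$ is base-point free, and I take $M_W'$ to be $\tfrac{1}{n}$ times a general member, chosen with support disjoint from $\mathrm{Supp}(D_W)$. Then $M_W' \sim_\Q M_W$ is effective, all coefficients of $D_W + M_W'$ are strictly less than $1$, and $(W, D_W + M_W')$ is KLT. The main obstacle is verifying the hypotheses of Theorem \ref{moduli-part} on the fibration $g : S \to W$ — in particular, that the characteristic bound on $I$ correctly passes to $D(I)$ (governing $\Theta^h$), which is exactly the point of assuming $\chr k > 2/\delta$.
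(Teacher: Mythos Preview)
Your use of tie-breaking is a genuine gap. Reid's trick replaces $D$ by a perturbed boundary $D_1$ (typically of the shape $(1-\epsilon)D+cH$ for a general ample $H$), and while this is harmless in Theorem~\ref{normality} where only the geometry of $W$ matters, here (2) and (3) are assertions about the \emph{original} pair $(X,D)$. After tie-breaking the coefficients of $D_1$ are no longer in $I$, so by Lemma~\ref{dcc-adjunction} the coefficients of $\Theta$ on $S$ are no longer in $D(I)$, and the crucial bound $p>2/\delta$ needed in Theorem~\ref{moduli-part} is lost. Even ignoring that, the output would be $(K_X+D_1)|_W\sim_\Q K_W+D_W+M_W$, not the desired relation for $D$, and your claim that $M_W$ depends only on $(X,D')$ would concern $D_1'$, not $D'$. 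If instead you only use tie-breaking to \emph{locate} a divisor $S$ and then write the crepant equation $K_Y+S+\Delta'=f^*(K_X+D)$ for the original $D$, then ``$(Y,S+\Delta')$ PLT'' is false in general: other LC places over $W$ for $(X,D)$ survive the extraction of a single $S$, so $(S,\Theta)$ is only LC and your argument that every coefficient of $D_W$ is $<1$ in (3) collapses.

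The paper sidesteps both problems by passing to a \emph{crepant} $\Q$-factorial DLT model $f:X'\to X$, so that the pullback boundary still has coefficients in $I$ and the adjoint boundary on any exceptional $E$ over $W$ has coefficients in $D(I)$; Theorem~\ref{moduli-part} is then applied to $f|_E:E\to W$. The cost of this route is that $E$ is not unique, so a separate argument (Lemma~\ref{div-independent}) is required to show $D_W$ is independent of the choice of $E$, and the inequality $\lfloor D_W\rfloor=0$ is obtained by a further pass to a log resolution using minimality of $W$ rather than from an a priori KLT property of $(S,\Theta)$.
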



\begin{proof}
Normality of $W$ follows from Theorem \ref{normality}.\\

Since $X$ is $\Q$-factorial, $(K_X+D)|_W=(K_X+D'+D'')|_W=(K_X+D')|_W+D''|_W$. Thus we may assume that all the components of $D$ contain $W$. Since $W$ is a minimal log canonical center of $(X, D)$ and $\codim_X W=2$, it does not intersect any other LC center of codimension $\>2$, by Lemma \ref{lcc-intersection}. Thus by shrinking $X$ (removing closed subsets of codimension $\>2$ which do not intersect $W$) if necessary we may assume that $W$ is the unique log canonical center of codimension $\>2$ of $(X, D)$.\\
 
Let $f: (X', D')\to (X, D)$ be a $\Q$-factorial DLT model over $(X, D)$ such that 
\begin{equation}\label{x'-log}
K_{X'}+D'=f^*(K_X+D).
\end{equation} 
Such $f$ exists by \cite[7.7]{Bir13}.\\

Note that, since $X$ is $\Q$-factorial, the exceptional locus of $f$ supports an effective $f$-anti-ample divisor. In particular all positive dimensional fibers of $f$ are contained in the support of $\lrd D'\rrd$.\\

Let $E$ be an exceptional divisor dominating $W$. Then $E$ is normal by Proposition \ref{dlt-adjunction}.  By adjunction we have 
\begin{equation}\label{E-adjunction}
	K_{E}+D'_E=(K_{X'}+D')|_{E}=f|_E^*((K_X+D)|_W)
	\end{equation}
and $(E, D'_E)$ is DLT, by Proposition \ref{dlt-adjunction} and the coefficients of $D'_E$ are in the set $D(I)$ by Lemma \ref{dcc-adjunction}. \\

By Theorem \ref{moduli-part}, there exist $\Q$-divisors $D_W=D_{\div}\>0$ and $M_W=D_{\mod}\>0$ on $W$ such that
\begin{equation}\label{W-adjunction}
	K_E+D'_E\sim_\Q f|_E^*(K_W+D_W+M_W).
	\end{equation} 
Since $f|_E: E\to W$ has connected fibers, from \eqref{E-adjunction}, \eqref{W-adjunction} and the projection formula for locally free sheaves, we get
\begin{equation}\label{final-adjunction}
(K_X+D)|_W\sim_\Q K_W+D_W+M_W.
	\end{equation}
Lemma \ref{div-independent} given below shows that $D_W$ is independent of the choice of the exceptional divisor $E$ dominating $W$.\\
 
From the definition of $D_W$ we see that $ D_W\>0$, since $D'_E\>0$. Also, since $D_W$ is independent of the birational modifications (by \cite[Remark 7.3(ii)]{PS09}) and $W$ is a minimal LC center, by taking a log resolution of $(X', D')$ and working on the strict transform of $E$, we see that the coefficients of $D_W$ are strictly less than $1$. Thus $\lrd D_W\rrd=0$.\\ 
Since $M_W$ is semi-ample and $W$ is a smooth curve, either $M_W=0$ or $M_W$ is ample. In the later case by Bertini's theorem there exists an effective $\Q$-divisor $M'_W\sim_\Q M_W$ such that $\lrd M'_W\rrd=0$ and $\supp(M'_W)\cap \supp(D_W)=\emptyset$. Hence $(W, D_W+M'_W)$ is KLT.

\end{proof}

\begin{lemma}\label{div-independent}
With the same hypothesis as in Theorem \ref{adjunction}, the divisor $D_W=D_{\div}$ on $W$ is independent of the choice of the exceptional divisors dominating $W$.    
	\end{lemma}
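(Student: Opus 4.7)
The plan is to show, for each prime divisor $Q \subset W$, that the coefficient $1 - c_Q^{(E)}$ of $Q$ in $D_W$ equals an intrinsic invariant of $(X,D)$ at $\eta_Q$, and hence does not depend on the exceptional divisor $E$ dominating $W$ used to compute it. Here, by Definition \ref{boundary-def},
\[
c_Q^{(E)} := \sup\{c \geq 0 : (E,\, D'_E + c(f|_E)^{*}Q) \text{ is LC over } \eta_Q\}.
\]
After localizing at $\eta_Q$, I aim to identify $c_Q^{(E)}$ with the log canonical threshold $\lambda_Q := \lct_{\eta_Q}(X, D;\, \II_Q)$, where $\II_Q \subset \OO_X$ denotes the ideal sheaf of $Q$; since $\lambda_Q$ depends only on $(X, D)$ at $\eta_Q$, this will yield the claim.

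Two standard facts give the inequality $c_Q^{(E)} \geq \lambda_Q$. First, the crepant identity $K_{X'}+D' = f^{*}(K_X+D)$, combined with the observation that $\operatorname{ord}_v(f^{-1}\II_Q\cdot\OO_{X'}) = \operatorname{ord}_v(\II_Q)$ for any divisorial valuation $v$ of $K(X)=K(X')$, yields $\lambda_Q = \lct_{\eta_Q}(X',D';\, f^{-1}\II_Q\cdot\OO_{X'})$. Second, Proposition \ref{dlt-adjunction} ensures that $E$ is a normal log canonical place of the $\Q$-factorial DLT pair $(X',D')$; so by inversion of adjunction, $(X', D' + c f^{-1}\II_Q\cdot\OO_{X'})$ is LC along $E$ over $\eta_Q$ if and only if $(E,\, D'_E + c(f|_E)^{*}Q)$ is LC over $\eta_Q$, using that $f^{-1}\II_Q\cdot\OO_{X'}$ restricted to $E$ cuts out the Cartier divisor $(f|_E)^{*}Q$. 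Combined, these show that $c_Q^{(E)}$ equals the LCT of $(X', D';\, f^{-1}\II_Q\cdot\OO_{X'})$ at $\eta_Q$ restricted to divisorial valuations whose center on $X'$ lies in $E$; in particular $c_Q^{(E)} \geq \lambda_Q$.

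The harder step is the opposite inequality: a divisorial valuation $v$ of $K(X)$ with center $\eta_Q$ on $X$ has its center on $X'$ in $f^{-1}(\eta_Q) \subset \bigcup_j E_j$, which a priori may not lie in $E$. To handle this I use the minimality of $W$. After the preliminary shrinking in the proof of Theorem \ref{adjunction}, $W$ is the unique log canonical center of codimension $\geq 2$, so by Lemma \ref{lcc-intersection} any two exceptional divisors $E, E'$ of $f$ over $W$ are either disjoint or intersect along a curve which must dominate $W$ (otherwise its image under $f$ would be a strictly smaller LC center). By the connectedness of $f^{-1}(\eta_Q)$, a consequence of the MMP machinery for $3$-folds in characteristic $p>5$, any two such divisors are linked by a chain $E=E_{i_0}, E_{i_1},\ldots, E_{i_N}=E_j$ in which consecutive pairs meet along curves $C_k$ dominating $W$. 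A double adjunction along $(X',D')\supset E_{i_k}\supset C_k$ versus $(X',D')\supset E_{i_{k+1}}\supset C_k$ produces the same induced pair on $C_k$, and a chain-of-adjunctions argument then transfers the LCT-constraint from $E_j$ to $E$ across this chain, giving $c_Q^{(E)}\leq \lambda_Q$. I expect this transfer, specifically matching the log discrepancy numerators and the $\II_Q$-multiplicity denominators on the two sides of each $C_k$ via the common adjunction on $C_k$, to be the main technical obstacle.
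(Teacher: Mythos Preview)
Your outline --- connectedness of the fibre, a chain $E=E_{i_0},\ldots,E_{i_N}$ of components of $\lfloor D'\rfloor$ linked by curves $C_k$ dominating $W$, and double adjunction to the $C_k$ --- is exactly the skeleton of the paper's proof. The detour through the ambient invariant $\lambda_Q=\lct_{\eta_Q}(X,D;\II_Q)$, however, adds nothing and introduces an unjustified step: you assert that $f^{-1}\II_Q\cdot\OO_{X'}$ restricts on $E$ to the Cartier divisor $(f|_E)^*Q$, but $\II_Q$ is the maximal ideal of a point of $X$, not the pullback of the ideal of $Q\subset W$, and the two restrictions need not agree as ideals on $E$. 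More importantly, your own argument for $c_Q^{(E)}\le\lambda_Q$ already reduces to comparing $c_Q^{(E_{i_k})}$ with $c_Q^{(E_{i_{k+1}})}$ along the chain, which is precisely the statement of the lemma; so $\lambda_Q$ is only a name for the common value, not a tool for establishing it. The paper dispenses with $\lambda_Q$ and compares two adjacent $E_1,E_2$ directly through a common curve $C\subset E_1\cap E_2$.

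The genuine gap is in the ``transfer'' step you flag at the end. Double adjunction and inversion of adjunction give only that the LCT of $(E_{i_k},D'_{E_{i_k}};(f|_{E_{i_k}})^*Q)$ computed \emph{near} $C_k$ equals the LCT on $C_k$; they do not rule out a strictly smaller threshold coming from a component of the fibre $(f|_{E_{i_k}})^{-1}(Q)$ disjoint from $C_k$. This is exactly the content of the paper's argument. Writing $t=\lct(E_1,C+\Delta'_{E_1};f|_{E_1}^*Q)$ and $s=\lct(C,\Delta'_C;f|_C^*Q)$, adjunction gives $t\le s$, and the hard direction $t\ge s$ is handled in two cases according to the degree of $C$ on the general fibre. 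In Case~I (degree $1$) the paper perturbs $K_{E_1}+C+\Delta'_{E_1}$ by a small relatively ample divisor built from a second horizontal component and then invokes connectedness of the NKLT locus on the surface $E_1$ over $Q$ \cite[8.3]{Bir13} to propagate a hypothetical non-LC point off $C$ back to a point of $C$, contradicting the definition of $s$. In Case~II (degree $2$) a direct structural argument shows there is in fact a unique exceptional divisor, so no comparison is needed. Your proposal identifies this step as ``the main technical obstacle'' but supplies no mechanism for it; matching numerators and denominators through $C_k$, as you suggest, only sees valuations whose centre on $E_{i_k}$ meets $C_k$ and cannot by itself exclude the off-$C_k$ contributions.
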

\begin{proof}
Let $E_1$ and $E_2$ be two exceptional divisors of $f$ dominating $W$ such that 
	\begin{equation}
		K_{X'}+E_1+E_2+\Delta' =f^*(K_X+D),
	\end{equation}
	where $f: X'\to X$ is the DLT model as above and $D'=E_1+E_2+\Delta'$.\\

Notice that if $\eta_W$ is the generic point of $W$, then $f^*\eta_W\cap { \rm NKLT}(X', E_1+E_2+\Delta)$ is connected (By localizing at $\eta_W$, this follows from a surface computation involving relative Kawamata-Viehweg vanishing theorem). Therefore we may assume that $E_1\cap E_2\neq \emptyset$.\\

	By adjunction on $E_1$ we get
	\begin{equation}\label{E-equation}
	K_{E_1}+C+\Delta'_{E_1}=f|_{E_1}^*((K_X+D)|_W),	
	\end{equation}
	where $C$ is an irreducible component of $E_1\cap E_2$ dominating $W$.\\

	Adjunction on $C$ gives
	\begin{equation}
	K_C+\Delta'_C=f|_C^*((K_X+D)|_W).	
	\end{equation}

	Let $Q$ be a point on $W$, and $t=\mbox{lct}(E_1, C+\Delta'_{E_1}; f|_{E_1}^*Q)$ and $s=\mbox{lct}(C, \Delta'_C; f|_C^*Q)$. Since $C$ is an irreducible component of $E_1\cap E_2$ dominating $W$, it is enough to show that $t=s$. By adjunction, $t\< s$. So by contradiction assume that $t<s$.\\

	Since $(E_1, C+\Delta'_{E_1})$ is DLT by Proposition \ref{dlt-adjunction}, $(E_1, C+\Delta'_{E_1}+t'f|_{E_1}^*Q)$ is LC outside of $f|_{E_1}^{-1}Q$ for any $t'>t$. Thus all NLC centers of $(E_1, C+\Delta'_{E_1}+t'f|_{E_1}^*Q)$ appear along $f|_{E_1}^{-1}Q$.\\

	The general fiber of $f|_{E_1}: E_1\to W$ is isomorphic to $\P^1$. Thus $\mbox{degree}((C+\Delta'_{E_1})|_{\P^1})=2$ by \eqref{E-equation}. There are two cases depending on whether $C$ intersects the general fiber with degree $1$ or $2$.\\

	\textbf{Case I:} \emph{$C$ intersects the general fiber with degree $1$}. Then there exists a horizontal component $C'$ of $\Delta'_{E_1}$. Let $H$ be an ample divisor on $E_1$, and $F_\eta$, the generic fiber of $f|_{E_1}: E_1\to W$. Choose $\lambda>0$ such that
	\[ (H-\lambda C')\cdot F_\eta=0. \]
	Then $(H-\lambda C')|_{F_\eta}\sim_{\Q} 0$. Thus by \cite[8.3.4]{Cor07}, $H\sim_{\Q} \lambda C'-\sum\lambda_i F_i$, where the $F_i$'s are irreducible components of some fibers of $f|_{E_1}$. By adding the pullback of some appropriate divisors from the base to $\lambda C'-\sum\lambda_i F_i$, we may assume that $\lambda_i>0$ for all $i$ and $\lambda C'-\sum\lambda_i F_i$ is $f|_{E_1}$-ample.\\

	Assume that there exists a point $P\in f|_{E_1}^{-1}Q$ but $P\notin C$ such that $(E_1, C+\Delta'_{E_1}+(t+\e)f|_{E_1}^*Q)$ is not LC at $P$, where $0<\e\ll 1$ such that $t+\e<s$. Then by choosing $0<\lambda, \lambda_i\ll 1$ we can assume that $(C+\Delta'_{E_1}-\lambda C'+\sum\lambda_i F_i) \>0$, $(E_1, C+\Delta'_{E_1}-\lambda C'+\sum\lambda_i F_i+(t+\e) f|_{E_1}^*Q)$ is still not LC at $P$, and
	\begingroup
	\fontsize{10pt}{12pt}\selectfont
	\begin{equation}
		-\left(K_{E_1}+C+\Delta'_{E_1}-\lambda C'+\sum\lambda_i F_i\right)=-f|_{E_1}^*((K_X+D)|_W)+\left(\lambda C'-\sum\lambda_i F_i\right)
	\end{equation}
	\endgroup
	is $f|_{E_1}$-ample.\\

	Then by \cite[8.3]{Bir13}, $\mbox{NKLT}(E_1, C+\Delta'_{E_1}-\lambda C'+\sum\lambda_i F_i+(t+\e)f|_{E_1}^*Q)\cap f|_{E_1}^{-1}Q$ is connected. Let $R\in C\cap f|_{E_1}^{-1}Q$. Then there exists a chain of curves $G_i$'s connecting $R$ and $P$, and contained in $\mbox{NKLT}(E_1, C+\Delta'_{E_1}-\lambda C'+\sum\lambda_i F_i+(t+\e)f|_{E_1}^*Q)\cap f|_{E_1}^{-1}Q$.\\

	Now $\mbox{NKLT}(E_1, C+\Delta'_{E_1}-\lambda C'+\sum\lambda_i F_i+(t+\e)f|_{E_1}^*Q) \subset \mbox{NKLT}(E_1, C+\Delta'_{E_1}+\sum\lambda_i F_i+(t+\e)f|_{E_1}^*Q)$. Since we are only concentrating on the NKLT centers along $f|_{E_1}^{-1}Q$, we may assume that $F_i$'s are all contained in $f|_{E_1}^{-1}Q$. Then by choosing $0<\lambda_i \ll 1$ for all $i$, such that $t+\e'=t+\e+\mbox{max}\{\lambda_i\}<s$, we see that $\mbox{NKLT}(E_1, C+\Delta'_{E_1}+\sum\lambda_i F_i+(t+\e)f|_{E_1}^*Q)\subset \mbox{NKLT}(E_1, C+\Delta'_{E_1}+(t+\e')f|_{E_1}^*Q)$. Thus the curves $G_i$'s are contained in the $\mbox{NKLT}(E_1, C+\Delta'_{E_1}+(t+\e')f|_{E_1}^*Q)$. Hence $G_i$'s are contained in $\mbox{NLC}(E_1, C+\Delta'_{E_1}+sf|_{E_1}^*Q)$. This implies that $(E_1, C+\Delta'_{E_1}+sf|_{E_1}^*Q)$ is not LC at $R\in C$. Then by inversion of adjunction we get a contradiction to the fact that $(C, \Delta'_C+sf|_C^*Q)$ is LC.\\

	\textbf{Case II:} \emph{$C$ intersects the general fiber with degree $2$}. In this case $E_1\cap E_2=C$, and $\Delta'_{E_1}\text{ and }\Delta'_{E_2}$ do not have any horizontal component with respect to $f|_{E_1}\text{ and } f|_{E_2}$, respectively, where $\Delta'_{E_1}$ and $\Delta'_{E_2}$ are defined by the adjunction
	\[K_{E_1}+\Delta'_{E_1}=f|_{E_1}^*((K_X+\Delta)|_W) \quad\text{and}\quad K_{E_2}+\Delta'_{E_2}=f|_{E_2}^*((K_X+\Delta)|_W). \]
Since $D\neq 0$ and every component of $D$ contains $W$, this implies that $\Delta'$ does not contain any component of $f^{-1}_*D$ (otherwise $\Delta'_{E_i}$ will have a non zero horizontal component with respect to $f|_{E_i}$). Therefore one of the $E_i$'s must be a component of $f^{-1}_*D$, say $E_2=f^{-1}_*D_i$, where $D_i$ is an irreducible component of $D$. Thus in this case the exceptional divisors of $f$ do not intersect each other. Since $X$ is $\Q$-factorial, the exceptional locus $\mbox{Ex}(f)$ of $f:X'\to X$ supports an effective $f$-anti-ample divisor and hence $\mbox{Ex}(f)\cap f^{-1}(w)$ is connected for all $w\in W$. Thus $f$ has a unique exceptional divisor in this case and we are done.\\

	\end{proof}


\bibliographystyle{hep}
\bibliography{references.bib}	
\end{document}